\newcommand{\R}{\mathbb{R}}
\newcommand{\C}{\mathbb{C}}
\newcommand{\N}{\mathbb{N}}
\newcommand{\cD}{\mathcal{D}}
\newcommand{\cF}{\mathcal{F}}
\newcommand{\cJ}{\mathcal{J}}
\newcommand{\cK}{\mathcal{K}}
\newcommand{\cQ}{\mathcal{Q}}
\newcommand{\cI}{\mathcal{I}}
\newcommand{\sph}[2]{S(#1,#2)}
\newcommand{\ball}[2]{B(#1,#2)}
\newcommand{\sect}[1]{T_{#1}}
\newcommand{\supp}{\mathrm{supp}}
\newcommand{\de}{\mathrm{d}}
\DeclareMathAlphabet{\mathcal}{OMS}{zplm}{m}{n}
\begin{document}

\title*{H$^1$ and BMO spaces for exponentially decreasing measures on homogeneous trees}
\author{Matteo Monti}
\institute{Matteo Monti \at{Dipartimento di Scienze Matematiche “Giuseppe Luigi Lagrange”, Dipartimento di Eccellenza 2018-2022, Politecnico di Torino, Corso Duca degli Abruzzi 24, 10129, Torino, Italy, email: \mbox{matteo.monti@polito.it}.} }
%
%
\maketitle

\abstract*{We consider a family of measures on a $q$-homogeneous tree that decrease exponentially with respect to the distance from the origin. Such measures are doubling with respect to the Gromov distance. We define atomic Hardy and BMO spaces for that measures, and we prove interpolation results regarding such spaces. As a consequence we have boundedness results for integral operators involving Hardy, BMO, and $L^p$ spaces.}

\abstract{We consider a family of measures on a $q$-homogeneous tree that decrease exponentially with respect to the distance from the origin. Such measures are doubling with respect to the Gromov distance. We define atomic Hardy and BMO spaces for that measures, and we prove interpolation results regarding such spaces. As a consequence we have boundedness results for integral operators involving Hardy, BMO, and $L^p$ spaces.}

\let\thefootnote\relax\footnotetext{This work is partially supported by the project "Harmonic analysis on continuous and discrete structures" funded by Compagnia di San Paolo 
	(Cup E13C21000270007). Furthermore, I am member of the Gruppo Nazionale per l’Analisi Matematica, la Probabilità e le loro Applicazioni (GNAMPA) of the Istituto Nazionale di Alta Matematica (INdAM). }
\let\thefootnote\relax\footnotetext{Keywords: homogeneous trees; Calder\'on-Zygmund theory; atomic Hardy spaces, interpolation.}
\let\thefootnote\relax\footnotetext{Mathematics Subject Classification (2010): 05C05; 30H10; 42B20.}

	\section*{Introduction}
	This chapter is the natural continuation of a joint work with F.~De~Mari and M.~Vallarino~\cite{dmv}, where we study the harmonic Bergman spaces on homogeneous trees associated to a certain class of measures: the exponentially decreasing measures with respect to the distance from the origin. In particular, we show that a Calder\'on-Zygmund decomposition is possible in that setting. The aim of this work is to use that decomposition to obtain interpolation results on atomic Hardy spaces and on bounded mean oscillation spaces (BMO in what follows), and consequent boundedness results for integral operators. 
	
	The Calder\'on-Zygmund decomposition, as well as Hardy and BMO spaces, were introduced for functions on $\R^d$ with respect to the Lebesgue measure~\cite{coifweiss},~\cite{SteinHA}. The Lebesgue measure is doubling with respect to the Euclidean distance. For this reason, several generalizations of this theory have been realized in doubling settings, see for example~\cite{badrruss} and~\cite{russ} for an analysis on graphs in a doubling context.
	The doubling condition is not necessary and it can be weakened by considering locally doubling measures, see~\cite{cmm} and~\cite{cmmfinite}.
	
	On homogeneous trees, the boundedness of singular integrals associated with the combinatorial Laplacian has been investigated in~\cite{cms}, while Celotto and Meda~\cite{celottomeda} studied various Hardy spaces in this context.
	In~\cite{arditti1} and~\cite{arditti2}, Arditti, Tabacco and Vallarino analyze Hardy and BMO spaces for a sort of level measure, a measure having the horocyclic index (with respect to a fixed boundary point) as density. In~\cite{lstv}, the same theory is developed for a more general class of measures, called flow measures. All such measures are not doubling, but only locally doubling. The choice of the distance is not always canonical: the measures we consider here and in~\cite{dmv} are doubling with respect to the Gromov distance but not to the usual graph distance.

	Let $X$ be a $q$-homogeneous tree. The class of measures we consider is formed by the measures $\mu_\alpha(x):=q^{-\alpha|x|}$, $\alpha>1$, where $|x|$ denotes the distance of the vertex $x$ from the origin. A Calder\'on-Zygmund decomposition for functions in $L^1(\mu_\alpha)$ is provided in Proposition~\ref{caldzyg}; it is based on the balls of the Gromov metric, that are essentially sectors of the tree. The measures $\mu_\alpha$ are the analogous of the measures $(1-|x+iy|^2)^{\alpha-2}\de x\de y$, $\alpha>1$, on the hyperbolic disk. In this sense,~\cite{dmv} can be read as a discrete counterpart of~\cite{denghuang} on the hyperbolic disk, where similar results are obtained for Bergman measures.
	
	Following the classical theory (see for example~\cite{coifweiss}), we define atomic Hardy spaces $H^{p,1}$ and bounded mean oscillation spaces ${\rm BMO}_r$, for $1<p\leq\infty$ and $1\leq r<\infty$. It is proved that ${\rm BMO}_{p'}$ characterizes the dual space of $H^{1,p}$ (for $1<p\leq \infty$) and that all the $H^{1,p}$ (and then all the ${\rm BMO}_{p'}$) are the equal as vector spaces with equivalent norms. Hence we put $H^1:=H^{1,\infty}$ and ${\rm BMO}:={\rm BMO}_1$. 
	
	In Section~\ref{secinter}, we prove Theorem~\ref{thmintcompl} and Corollary~\ref{corintcom}, our main results on complex interpolation involving $H^1$, ${\rm BMO}$ and $L^p(\mu_\alpha)$. In particular we show that, for every $1<p<\infty$, $L^p(\mu_\alpha)$ is a complex interpolation space between $L^{1}(\mu_\alpha)$ and ${\rm BMO}$.
	By duality, we have that $L^p(\mu_\alpha)$ is also an interpolation space between $H^1$ and $L^{\infty}(\mu_\alpha)$, and then between $H^1$ and ${\rm BMO}$.
	The proof is based on a good lambda inequality presented in Proposition~\ref{propgoodlambda}.
	
	From results of the form of Corollary~\ref{corintcom}, boundedness results for integral operators classically follow. We resume them in Theorem~\ref{theobound}. We need a reformulation of the classical H\"ormander's condition for a kernel $K\colon X\times X\to\C$: if we denote by $\sect{v}$ the sector of $v\in X$, then the condition reads
	\begin{equation}\label{hormintro}
			\sup_{v\in X\setminus\{o\}}\sup_{x,y\in \sect{v}}\sum_{z\in X\setminus \sect{v}}|K(z,x)-K(z,y)|q^{-\alpha|z|}<+\infty. 
	\end{equation}
	If $K$ satisfies~\eqref{hormintro}, an integral operator $\cK$ with kernel $K$ that is bounded on $L^2(\mu_\alpha)$ it is also bounded on $H^1$ and, by interpolation, on $L^p(\mu_\alpha)$ for $1<p<2$. By duality, if $K^*(x,y)=\overline{K(y,x)}$ satisfies~\eqref{hormintro}, then $\cK$ is bounded on ${\rm BMO}$ and on $L^p(\mu_\alpha)$, for $2<p<\infty$.
	
	A natural question is whether there are other measures on $X$ for which this approach can be replicated. We try to answer in the final section. We focus our attention on the class of reference measures introduced in~\cite{ccps} in the definition of harmonic Bergman spaces on $X$. We provide a characterization of the subfamily of the reference measures that are doubling with respect to the Gromov distance, for which the main results can be obtained.
	
	\section{Preliminaries}
	Let $X$ be a $q$-homogeneous tree with $q>1$, that is a connected and loop-free graph in which every vertex is joined with exactly $q+1$ vertices. The tree is endowed with the canonical discrete distance $d$ defined by the number of edges lying in the unique finite path joining the two vertices. We fix an origin $o\in X$ and we set $|x|=d(o,x)$ for every $x\in X$. We denote the sphere and the ball of radius $n\in\N$ centered in $x\in X$ respectively by
	\[\sph{x}{n}=\{y\in X\colon d(x,y)=n \},\qquad \ball{x}{n}=\{y\in X\colon d(x,y)\leq n \}. \]
	We call \textit{predecessor} of $x\in X\setminus\{o\}$ the unique neighbor $p(x)$ of $x$ such that $|p(x)|=|x|-1$. It is useful to consider the predecessor as a (surjective but not injective) function $p\colon X\setminus\{o\}\to X$ so that its $\ell$-power is $p^\ell\colon X\setminus\ball{o}{\ell-1}\to X$. Furthermore we call
	 \textit{successors} of $x\in X$ all neighbors of $x$ different from the predecessor, and we denote the family of successors by $s(x)$. The \textit{sector} of $x\in X$ is
	 \[\sect{x}:=\{y\in X\colon x=p^\ell(y), \text{ for some }\ell\in\N\}\subseteq X. \]
	 Observe that $p^{|y|}(y)=o$ for every $y\in X$ and then $\sect{o}=X$. Given $x,y\in X$, we call the \textit{confluent} of $x$ and $y$ the furthest vertex from the origin $x\wedge y\in X$ satisfying
	 $x\wedge y=p^{|x|-|x\wedge y|}(x)=p^{|y|-|x\wedge y|}(y)$, or, equivalently, $\{x,y\}\subseteq\sect{x\wedge y}$. Clearly, $x\wedge x=x$ and $x\wedge o=o$.
	
	We introduce another distance on $X$, usually called \textit{Gromov distance}, see~\cite{ArcRoc} and~\cite{Gromov}, defined as
	\begin{equation*}
		\rho(x,y):=\begin{cases}
			0,&\text{if }x=y,
			\\ e^{-|x\wedge y|}, &\text{otherwise.}
		\end{cases}
	\end{equation*}
	For every $x\in X\setminus\{o\}$, if $y\in X\setminus\{x\}$ then $\rho(x,y)=e^{-|x\wedge y|}\in [e^{-|x|},1]$ and $|x\wedge y|=-\log(\rho(x,y))$, so that we have \[y\in\sect{p^{|x|+\log(\rho(x,y))}(x)}\setminus\sect{p^{|x|+\log(\rho(x,y))-1}(x)}.\]
	Hence, the nontrivial balls with respect to $\rho$ having center in $x$ are sectors of the tree. More in general, we have
	\begin{equation}\label{balle}
		B_\rho(x,r):=\{ y\in X\colon\rho(x,y)<r\}=\begin{cases}
			\{x\},&\text{if }0<r\leq e^{-|x|},\\
			T_{p^{|x|+\lfloor\log r\rfloor}(x)},&\text{if }  e^{-|x|}<r\leq 1,\\
			X,&\text{if }r>1.
		\end{cases}
	\end{equation}
	Observe that in the special case $x=o$ we have that $B_\rho(o,r)=\{o\}$ if $0<r\leq 1$ and $B_\rho(o,r)=X$ for every $r>1$.
	Hence, every vertex $x$ is the center of exactly $|x|+2$ balls.

	The aim of this work is to study the boundedness of integral operators on homogeneous trees with respect to a certain class of measures. 
	We consider the family of \textit{exponentially decreasing radial measures} defined, for every $\alpha>1$, by
	\[\mu_\alpha(x):=q^{-\alpha|x|},\qquad x\in X.\]
	We set $L_\alpha^p:=L^p(\mu_\alpha)$ and $\|\cdot\|_{p,\alpha}:=\|\cdot\|_{L_\alpha^p}$.
	It is easy to check that such measures are finite on $X$. Furthermore, although they are not doubling with respect to the distance $d$ (see~\cite{dmv} for detail), they are doubling with respect to the distance $\rho$.
	
	\begin{proposition}[\cite{dmv}]\phantom{\label{doubling}}
		The triple $(X,\rho,\mu_\alpha)$ is globally doubling for every $\alpha>1$ with doubling constant $C_\alpha=\max\{q^\alpha+1,(1-q^{1-\alpha})^{-1}\}$, that is 
		\begin{equation}\label{eqdoubling}
			\mu_\alpha(B_\rho(x,2r))\leq C_\alpha\mu_\alpha(B_\rho(x,r)),\qquad x\in X,\,r>0.
		\end{equation}
	\end{proposition}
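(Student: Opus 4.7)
The strategy is a case analysis driven by the explicit classification of $\rho$-balls recorded in~\eqref{balle}. Fix $x\in X$ and $r>0$; the inequality is trivial whenever $B_\rho(x,2r)=B_\rho(x,r)$, so the real work lies in the steps along the nested chain of admissible balls,
\[
\{x\}\;\subsetneq\;\sect{x}\;\subsetneq\;\sect{p(x)}\;\subsetneq\;\cdots\;\subsetneq\;\sect{o}=X.
\]
Since $\log 2<1$, doubling $r$ shifts $\lfloor\log r\rfloor$ by at most one, so $B_\rho(x,2r)$ is either equal to $B_\rho(x,r)$ or exactly the next element in this chain.

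The key quantitative ingredient is the $\mu_\alpha$-mass of a sector. For $v\in X\setminus\{o\}$, every vertex of $\sect{v}$ at graph distance $\ell\ge 0$ from $v$ satisfies $|y|=|v|+\ell$, and there are exactly $q^{\ell}$ such vertices since inside $\sect{v}$ each vertex has $q$ successors. The resulting geometric series converges (this is where $\alpha>1$ enters) and yields
\[
\mu_\alpha(\sect{v})=\sum_{\ell\ge 0}q^{\ell}\,q^{-\alpha(|v|+\ell)}=\frac{q^{-\alpha|v|}}{1-q^{1-\alpha}},
\]
while $\sect{o}=X$ must be treated separately because $o$ has $q+1$ neighbors rather than $q$; an analogous computation gives $\mu_\alpha(X)=(1+q^{-\alpha})/(1-q^{1-\alpha})$.

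With these formulas in hand, each surviving case becomes a one-line check. For $x\ne o$: when $B_\rho(x,r)=\{x\}$ and $B_\rho(x,2r)=\sect{x}$ (which happens precisely when $r\le e^{-|x|}<2r$), the ratio is exactly $(1-q^{1-\alpha})^{-1}$; when $B_\rho(x,r)=\sect{v}$ is upgraded to $\sect{p(v)}$ with $p(v)\ne o$, the ratio equals $q^{-\alpha(|v|-1)}/q^{-\alpha|v|}=q^{\alpha}$; and when the larger ball jumps from the sector of a neighbor of the origin to all of $X$, the ratio becomes $(1+q^{-\alpha})/q^{-\alpha}=q^{\alpha}+1$, the extra unit coming from the $(q+1)$-st branch at the root. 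The maximum of the bounds produced in these cases is $C_\alpha$.

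The main obstacle is not any individual computation but the bookkeeping needed to verify via~\eqref{balle} that every admissible pair $(B_\rho(x,r),B_\rho(x,2r))$ fits one of the cases above; particular care is required at $x=o$, where the chain collapses to $\{o\}\subsetneq X$, and at the transition from height-one sectors to the full tree, where the extra branch at the root produces the sharp constant $q^{\alpha}+1$ rather than $q^{\alpha}$.
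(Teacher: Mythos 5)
Your argument is correct and follows essentially the same route as the paper: both reduce to the observation that doubling $r$ moves the ball at most one step up the chain $\{x\}\subsetneq\sect{x}\subsetneq\sect{p(x)}\subsetneq\cdots\subsetneq X$, compute $\mu_\alpha(\sect{v})$ by the same geometric series, and check the same three ratios $(1-q^{1-\alpha})^{-1}$, $q^\alpha$, and $q^\alpha+1$ to obtain $C_\alpha$. No substantive differences.
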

	\begin{proof}
		Let $\alpha>1$. We start by computing for every $x\in X\setminus\{o\}$
		\begin{equation}\label{misurasettore}
			\mu_\alpha(\sect{x})=\sum_{\ell=0}^{+\infty}q^{\ell}q^{-\alpha(\ell+|x|)}=q^{-\alpha|x|}\frac{1}{1-q^{1-\alpha}}.
		\end{equation}
		Clearly, it is sufficient to prove~\eqref{eqdoubling} for $r\in(0,1]$, since $B_\rho(x,r)=B_\rho(x,2r)=X$ for every $r> 1$.
		Let $0<r\leq 1$. We put $\{z\}:=z-\lfloor z\rfloor\in[0,1)$ and we have
		\begin{equation*}
			\lfloor \log(2r)\rfloor=\begin{cases}
				\lfloor \log r\rfloor,&\text{if }0\leq \{\log r\}<1-\log 2,\\
				1+\lfloor \log r\rfloor,&\text{if }1-\log 2\leq \{\log r\}<1.
			\end{cases}
		\end{equation*}
		Hence, whenever $B_\rho(x,r)=\{x\}$ we have that $B_\rho(x,2r)\in \{\{x\},\sect{x}\}$, and if $B_\rho(x,r)=\sect{x}$ then $B_\rho(x,2r)\in\{\sect{x},\sect{p(x)}\}$.
		Now we show that the measures of the balls are uniformly comparable. If $x\in X\setminus\{o\}$, then by~\eqref{misurasettore}
	\begin{equation}\label{rapportosettvert}
			\frac{\mu_\alpha(\sect{x})}{\mu_\alpha(\{x\})}=\frac{q^{-\alpha|x|}(1-q^{1-\alpha})^{-1}}{q^{-\alpha|x|}}=\frac{1}{1-q^{1-\alpha}}.
	\end{equation}
		If $|x|>1$, then 
		\begin{equation}\label{rapportosettsettpadre}
			\frac{\mu_\alpha(\sect{p(x)})}{\mu_\alpha(\sect{x})}=\frac{q^{-\alpha(|x|-1)}(1-q^{1-\alpha})^{-1}}{q^{-\alpha|x|}(1-q^{1-\alpha})^{-1}}=q^\alpha.
		\end{equation}
		Finally, if $|x|=1$, then 
		\begin{equation}\label{rapportosetttutto}
			\frac{\mu_\alpha(X)}{\mu_\alpha(\sect{x})}=\frac{(1+q^{-\alpha})(1-q^{1-\alpha})^{-1}}{q^{-\alpha}(1-q^{1-\alpha})^{-1}}=q^\alpha+1.
		\end{equation}
		Hence $(X,\rho,\mu_\alpha)$ is doubling with constant $C_\alpha=\max\{q^\alpha+1,(1-q^{1-\alpha})^{-1}\}$.
	\end{proof}
	
%
%

	The family of exponential decreasing reference measures can be view as the natural counterpart of the measures $(1-|x+iy|^2)^{\alpha-2}$ on the hyperbolic disk. For the functions that are integrable with respect to such measures, we prove a Calder\'on-Zygmund decomposition; then we introduce $H^1$ and BMO spaces and we discuss interpolation properties. In the next section, we prove a boundedness result for integral operators.
	
	We start with a preliminary geometrical result that shows the existence of a family $\mathcal{D}$ of subsets of $X$ formed by an infinite family of partitions of $X$ in singletons and sectors. In particular, the partition at a given scale is a refinement of the partition at the previous scale, and the measure of a partitioning set is comparable with the measure of the set which contains it in the previous partition. The family $\cD$ can be thought of as the analogous of the family of dyadic sets in the Euclidean case.
	Observe that $\mathcal{D}$ does not depend on $\alpha>1$.
	\begin{lemma}[Lemma~29~\cite{dmv}\label{lemdecomp}]
		For every $m\in\N$, there exists $I_m\in\N$ and sets $D_{k,m}\subseteq X$ for every $k\in\cI_m:=\{0,\dots,I_m\}$ such that the family $\cD$ defined by
		\[\cD:=\{D_{m,k}\subseteq X\colon m\in\N,\, k\in\cI_m\},\]
		satisfies:
		\begin{enumerate}
			\item\label{i} for every $m\in\N$, the family $\cD_m:=\{D_{m,k}\colon k\in\cI_m\}$ is a partition of $X$;
			\item\label{ii} the partition $\cD_m$ at scale $m>0$ is a refinement of the partition $\cD_{m-1}$, that is, for every $k'\in\cI_{m-1}$ there exists $\cI_{m,k'}\subseteq\cI_m$ such that \[D_{k',m-1}=\bigsqcup_{k\in\cI_{m,k'}}D_{k,m};\]
			\item\label{iii} for every $k\in\cI_m$ and $k'\in\cI_{m-1}$ for which $D_{k,m}\subseteq D_{k',m-1}$, we have
			\[\mu_\alpha(D_{k,m})\leq\mu_\alpha(D_{k',m-1})\leq C_{\alpha}\mu_\alpha(D_{k,m});\]
			\item\label{iiii} for every $v\in X$, $\{v\}\subseteq \cD_m$, whenever $m\geq |v|$.
		\end{enumerate}
	\end{lemma}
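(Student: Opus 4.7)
The natural candidate is a family built entirely from the Gromov atoms in~\eqref{balle}, namely singletons and sectors. For each $m\in\N$, I define
\[
\cD_m := \bigl\{\{v\}:v\in X,\ |v|\leq m\bigr\}\ \cup\ \bigl\{\sect{v}:v\in X,\ |v|=m+1\bigr\},
\]
which is finite (there are $(q+1)q^m$ vertices at height $m+1$), and let $\cI_m=\{0,\dots,I_m\}$ be any enumeration. The shift in level---sectors sit at height $m+1$ inside $\cD_m$---is chosen so that~\ref{iiii} activates precisely at $m=|v|$.

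First I verify~\ref{i} and~\ref{ii}. Given $w\in X$: if $|w|\leq m$, then $\{w\}\in\cD_m$; otherwise $w$ belongs to the unique sector $\sect{p^{|w|-m-1}(w)}$, whose root has height $m+1$. Disjointness follows because any two distinct vertices at the same height have disjoint sectors (each $y$ has a unique $k$-th predecessor for every $k\leq|y|$). For~\ref{ii}, every singleton of $\cD_{m-1}$ is already a singleton of $\cD_m$, while each sector $\sect{v}\in\cD_{m-1}$ (so $|v|=m$) decomposes inside $\cD_m$ as
\[
\sect{v}=\{v\}\ \sqcup\ \bigsqcup_{w\in s(v)}\sect{w},
\]
where $\{v\}$ is a singleton of $\cD_m$ (since $|v|=m$) and each $\sect{w}$ is a sector of $\cD_m$ (since $|w|=m+1$). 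This simultaneously exhibits the required index set $\cI_{m,k'}$.

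For~\ref{iii}, three parent-child types occur. Singleton-to-singleton is trivial. The case $\sect{v}\to\{v\}$ with $|v|=m$ is exactly~\eqref{rapportosettvert}, giving ratio $(1-q^{1-\alpha})^{-1}$. The case $\sect{v}\to\sect{w}$ with $w\in s(v)$ is exactly~\eqref{rapportosettsettpadre}, giving ratio $q^\alpha$. Both are bounded by $C_\alpha=\max\{q^\alpha+1,(1-q^{1-\alpha})^{-1}\}$, and in every case the child is a subset of the parent, so $\mu_\alpha(D_{k,m})\leq\mu_\alpha(D_{k',m-1})\leq C_\alpha\mu_\alpha(D_{k,m})$. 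Property~\ref{iiii} is immediate from the definition: $\{v\}\in\cD_m$ iff $|v|\leq m$.

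There is no serious analytic obstacle---the argument rests only on the tree geometry and on the sector-measure identity~\eqref{misurasettore} computed in the previous proposition. The one delicate point is the scale alignment: if one placed sectors of level $m$ (rather than $m+1$) inside $\cD_m$, property~\ref{iiii} would fail at $m=|v|$ and would only hold for $m\geq|v|+1$, which would propagate as awkward off-by-one adjustments throughout the Calder\'on-Zygmund machinery built on top of $\cD$.
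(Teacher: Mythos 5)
Your construction is correct and is essentially the paper's: both families consist of singletons for vertices near the origin together with the sectors rooted on a sphere, and property~(iii) reduces to the same three measure identities established in the doubling proposition. The only real difference is a one-level shift. The paper sets $\cD_0=\{X\}$ and, for $m>0$, takes singletons on $\ball{o}{m-1}$ and sectors rooted on $\sph{o}{m}$; so your $\cD_m$ is the paper's $\cD_{m+1}$, and your total family is the paper's $\cD$ with the set $X$ deleted. Your indexing has the advantage of matching statement~(iv) literally ($\{v\}\in\cD_m$ exactly when $m\geq|v|$, whereas the paper's construction only gives this for $m\geq|v|+1$), and it also sidesteps the comparison of $\mu_\alpha(X)$ with $\mu_\alpha(\{o\})$, which is not covered by \eqref{rapportosettvert}. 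The price is that $X\notin\cD$: the Calder\'on--Zygmund stopping-time argument in Proposition~\ref{caldzyg} starts the selection from $D_{0,0}=X$ and uses that every selected set has a parent in $\cD$ to get the upper bound in \eqref{tipot}, so with your family the top-level sets $\{o\}$ and $\sect{v}$, $|v|=1$, have no parent and that step would need a separate (easy, but not free) remark.
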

	\begin{proof}
		For every $m\in\N$ we set
		\[I_m:=\# \ball{o}{m}-1=\begin{dcases*}
			0,&if $m=0$;\\
			\frac{q^{m+1}+q^{m}-q-1}{q-1},&if $m>0$.
		\end{dcases*}.\]
		We label the vertices in such a way that $v_0=o$, $s(o)=\sph{o}{1}=\{v_1,\dots,v_{q+1}\}$, and $s(v_k)=\{v_{qk+\ell}\colon \ell\in\{1,\dots,q\}\}$ for every $k\in\N\setminus\{0\}$. Since $\cI_0=\{0\}$, it is sufficient to set $D_{0,0}=X$. Then, for every $m\in \N\setminus\{0\}$, we set
		\begin{alignat*}{2}
			D_{k,m}&:=\{v_k\},\qquad&&\text{if }k\in\cI_{m-1},\iff \text{ if }  v_k\in\ball{o}{m-1},\\ 
			D_{k,m}&:=\sect{v_k},&&\text{if }k\in\cI_m\setminus\cI_{m-1}\iff\text{ if } v_k\in\sph{o}{m}. 
		\end{alignat*}
		In this way, (i), (ii), and (iv) easily follow by construction.   Finally,~(iii) follows from~\eqref{rapportosettvert},~\eqref{rapportosettsettpadre},~\eqref{rapportosetttutto}, and the fact that for $m>0$
		\begin{equation}
			D_{k',m-1}\in\begin{dcases*}
				\{\sect{v},\sect{p(v)} \},&if $D_{k,m}=\sect{v}$;\\
				\{\{v\},\sect{v} \},&if $D_{k,m}=\{v\}$.
			\end{dcases*}
		\end{equation}
	\end{proof}
	
	We define the \textit{Hardy-Littlewood maximal function} $M$ with respect to $\mu_\alpha$ associated to the family $\cD$ as follows
	\[Mf(x)=\sup_{\substack{x\in D\\D\in\cD}}\frac{1}{\mu_\alpha(D)}\sum_{z\in D}|f(z)|q^{-\alpha|z|},\qquad f\colon X\to\C.\]
	As a consequence of the decomposition presented in Lemma~\ref{lemdecomp}, we obtain the following result.
	\begin{proposition}\label{hlweak}
		The Hardy-Littlewood maximal function $M$ is of weak type (1,1) and bounded on $L_\alpha^p$, for every $1<p\leq\infty$.
	\end{proposition}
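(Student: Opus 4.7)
The plan is a classical stopping-time selection argument for a dyadic maximal function, relying only on the nested partition structure of $\cD$ established in Lemma~\ref{lemdecomp}, followed by Marcinkiewicz interpolation between a weak $(1,1)$ bound and a trivial $L^\infty_\alpha$ bound.

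First I would prove the weak $(1,1)$ inequality. Fix $f \in L^1_\alpha$ and $\lambda>0$, and set $E_\lambda := \{Mf > \lambda\}$. For any $x \in X$ the sets of $\cD$ containing $x$ form a decreasing chain: by (i) each $\cD_m$ is a partition, so at every scale $m$ there is exactly one element $D_m(x) \in \cD_m$ containing $x$; by (ii) these sets decrease as $m$ grows; and by (iv) they stabilise to $\{x\}$ once $m \geq |x|$. Consequently, the supremum defining $Mf(x)$ reduces to a maximum over finitely many distinct dyadic sets, so for every $x \in E_\lambda$ there is a largest $D^x \in \cD$ with $x \in D^x$ and
\[
\frac{1}{\mu_\alpha(D^x)} \sum_{z \in D^x} |f(z)|\, q^{-\alpha|z|} > \lambda.
\]
I would then verify that two such selectors $D^x$ and $D^y$ are either equal or disjoint: if they lay at different scales, the finer would be contained in the coarser by (ii), contradicting the maximality in the choice for any vertex in the finer one. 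Enumerating the distinct selectors yields a pairwise disjoint family $\{D_j\}_j \subseteq \cD$ covering $E_\lambda$, and summing the defining inequality gives
\[
\mu_\alpha(E_\lambda) \leq \sum_j \mu_\alpha(D_j) \leq \frac{1}{\lambda} \sum_j \sum_{z \in D_j} |f(z)|\, q^{-\alpha|z|} \leq \frac{\|f\|_{1,\alpha}}{\lambda}.
\]

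The bound on $L^\infty_\alpha$ is immediate: every average of $|f|$ against $\mu_\alpha/\mu_\alpha(D)$ is dominated by $\|f\|_{\infty,\alpha}$, so $\|Mf\|_{\infty,\alpha} \leq \|f\|_{\infty,\alpha}$. The remaining range $1<p<\infty$ then follows from the Marcinkiewicz interpolation theorem applied to the weak $(1,1)$ and strong $(\infty,\infty)$ estimates just established, since $M$ is sublinear.

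The argument is essentially routine and uses only items (i), (ii), and (iv) of Lemma~\ref{lemdecomp}; the measure-comparability property (iii) plays no role here (it is instead the key ingredient in the Calderón–Zygmund decomposition). The only point deserving care is the existence of the maximal selector $D^x$, but, as noted, this is automatic because the chain of dyadic sets through any fixed vertex has only finitely many distinct elements. I therefore do not foresee a genuine obstacle in this proof.
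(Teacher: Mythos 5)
Your proof is correct and follows essentially the same route as the paper: the weak $(1,1)$ bound via selection of maximal dyadic sets (which are pairwise disjoint or nested by the partition--refinement structure of $\cD$), the trivial $L^\infty_\alpha$ bound, and Marcinkiewicz interpolation for $1<p<\infty$. The only cosmetic difference is that the paper treats the case $\lambda\leq \mu_\alpha(X)^{-1}\|f\|_{1,\alpha}$ separately, whereas your selection argument (allowing $D^x=X$) handles it uniformly.
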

	\begin{proof}
		The boundedness of $M$ on $L_\alpha^\infty$ easily follows from
		\[|Mf(x)|=\sup_{\substack{x\in D\\D\in\cD}}\frac{1}{\mu_\alpha(D)}\sum_{z\in D}|f(z)|q^{-\alpha|z|}\leq \|f\|_{\infty,\alpha},\qquad x\in X. \]
		Now we prove that $M$ is of weak type (1,1). Let $\lambda>0$. We set
		\[\Omega_\lambda:=\{x\in X\colon Mf(x)>\lambda\}.\]
		 If $\lambda\leq(\mu_\alpha(X))^{-1}\|f\|_{1,\alpha}$, then
		 \[\mu_\alpha(\Omega_\lambda)\leq\mu_\alpha(X)\leq\frac{\|f\|_{1,\alpha}}{\lambda}. \]
		 Consider the case $\lambda>(\mu_\alpha(X))^{-1}\|f\|_{1,\alpha}$. There exists a partition $\{E_i\in\cD\colon i\in I\}$ which is at most countable such that 
		 \begin{equation}\label{medialambda}
		 	\frac{1}{\mu_\alpha(E_i)}\sum_{y\in E_i}|f(y)|q^{-\alpha|y|}>\lambda.
		 \end{equation} 
	 	Indeed such partition is obtained by considering the maximal dyadic sets satisfying~\eqref{medialambda} and using that fact that if $E_1,E_2\in\cD$, then, by their definition, either $E_1\cap E_2=\emptyset$ or $E_1\subseteq E_2$ (up to switch the two sets).
		 Hence, we have
		 \[\mu_\alpha(\Omega_\lambda)\leq \sum_{i\in I}\mu_\alpha(E_i)\leq\frac{1}{\lambda}\sum_{i\in I}\|f\|_{L_\alpha^1(E_i)}\leq\frac{\|f\|_{1,\alpha}}{\lambda}. \]
		 The boundedness of $M$ on $L_\alpha^p$ for $1<p<\infty$ follows from interpolation.
	\end{proof}
	Lemma~\ref{lemdecomp} leads to a Calder\'on-Zygmund decomposition for integrable functions on $X$ at level $\lambda\in\R^+$, sufficiently large with respect to the $L_\alpha^1$-norm of the function.
	\begin{proposition}[Proposition~30~\cite{dmv}\label{caldzyg}]
		Let 
		$f\in L_\alpha^1$ and $\lambda>\|f\|_{1,\alpha}/\mu_\alpha(X)$. There exist two families $\cQ$ and $\cF$ of disjoint sets in $\cD$ such that, if we denote by $\Omega$ and $F$ the disjoint union of all the sets in $\cQ$ and $\cF$, respectively, 
		the following properties hold:
		\begin{enumerate}
			\item $X=\Omega\sqcup F$;
			\item $|f(z)|\leq \lambda$ for every $z\in F$;
			\item there exist $g,\,b\colon X\to\C$ and $C>0$ such that $f=g+b$, $\supp\, b\subseteq \Omega$, and $\|g\|_{2,\alpha}^2\lesssim \lambda\|f\|_{1,\alpha}$. Moreover, if we set $b_Q=b\mathbbm{1}_Q$ for every $Q\in\cQ$, then
			\[\sum_{z\in Q} b_Q(z)q^{-\alpha|z|}=0,\qquad \sum_{Q\in\cQ}\|b_Q\|_{1,\alpha}\leq C\|f\|_{1,\alpha},\qquad Q\in\cQ.\]
			%
			%
		\end{enumerate}
	\end{proposition}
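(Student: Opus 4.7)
My plan is to run a Calder\'on--Zygmund stopping-time construction on the dyadic family $\cD$ given by Lemma~\ref{lemdecomp}. The hypothesis $\lambda>\|f\|_{1,\alpha}/\mu_\alpha(X)$ ensures that the root set $X=D_{0,0}\in\cD$ has $|f|$-average strictly below $\lambda$, so for each $x\in X$ the chain of dyadic ancestors whose $|f|$-average exceeds $\lambda$ is either empty or admits a largest element. Let $\cQ$ be the collection of such maximal dyadic sets as $x$ varies, and set $\Omega:=\bigsqcup_{Q\in\cQ}Q$; since two elements of $\cD$ are always either disjoint or nested by property~(ii), the family $\cQ$ is automatically pairwise disjoint. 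Put $F:=X\setminus\Omega$ and $\cF:=\{\{x\}\colon x\in F\}$, which is a disjoint subfamily of $\cD$ by property~(iv).

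Item~(i) is now tautological. For~(ii), if $x\in F$ then $\{x\}\in\cD$ is itself a dyadic set with average exactly $|f(x)|$, and this cannot exceed $\lambda$ (else $\{x\}\in\cQ$). For~(iii), define
\[
f_Q:=\frac{1}{\mu_\alpha(Q)}\sum_{z\in Q}f(z)q^{-\alpha|z|},\qquad g:=f\mathbbm{1}_F+\sum_{Q\in\cQ}f_Q\mathbbm{1}_Q,\qquad b:=f-g.
\]
Then $\supp b\subseteq\Omega$, each $b_Q=(f-f_Q)\mathbbm{1}_Q$ has vanishing $\mu_\alpha$-integral by the very definition of $f_Q$, and the triangle inequality $\|b_Q\|_{1,\alpha}\le 2\|f\mathbbm{1}_Q\|_{1,\alpha}$ combined with the disjointness of $\cQ$ yields $\sum_{Q\in\cQ}\|b_Q\|_{1,\alpha}\le 2\|f\|_{1,\alpha}$.

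The quantitative core is the $L^2$ estimate on $g$. On $F$ we have $|g|=|f|\le\lambda$, hence $\sum_{x\in F}|g(x)|^2 q^{-\alpha|x|}\le\lambda\|f\|_{1,\alpha}$. On $\Omega$, every $Q\in\cQ$ has level $m\ge 1$ (since the choice of $\lambda$ excludes $X$ from $\cQ$), hence admits a unique dyadic parent $\widetilde Q\in\cD_{m-1}$ with $Q\subseteq\widetilde Q$. The maximality of $Q$ forces the $|f|$-average on $\widetilde Q$ to be at most $\lambda$, and property~(iii) of Lemma~\ref{lemdecomp} gives $\mu_\alpha(\widetilde Q)\le C_\alpha\mu_\alpha(Q)$; combining the two,
\[
|f_Q|\le \frac{\mu_\alpha(\widetilde Q)}{\mu_\alpha(Q)}\cdot\frac{1}{\mu_\alpha(\widetilde Q)}\sum_{z\in\widetilde Q}|f(z)|q^{-\alpha|z|}\le C_\alpha\lambda.
\]
Therefore $\sum_{x\in Q}|g(x)|^2q^{-\alpha|x|}=|f_Q|^2\mu_\alpha(Q)\le C_\alpha\lambda\|f\mathbbm{1}_Q\|_{1,\alpha}$, and summing over $\cQ$ concludes the $L^2$ bound with $C\lesssim C_\alpha$.

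The only step that is not pure bookkeeping is the uniform pointwise bound $|f_Q|\le C_\alpha\lambda$: it is precisely the point at which the stopping-time maximality and the doubling comparability of adjacent dyadic levels have to be combined. The hypothesis on $\lambda$ enters in exactly one place, to guarantee that the parent $\widetilde Q$ used in this estimate exists for every $Q\in\cQ$.
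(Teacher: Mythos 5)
Your proposal is correct and follows essentially the same route as the paper: the same stopping-time selection of maximal dyadic sets on the family $\cD$, the same definition of $g$ as $f$ on $F$ and as the average $f_Q$ on each $Q$, and the same key estimate $|f_Q|\leq C_\alpha\lambda$ obtained from the parent $\widetilde Q$ together with property~(iii) of Lemma~\ref{lemdecomp}. The only differences are in constant bookkeeping (e.g.\ you get $C_\alpha$ where the paper, passing through $\mu_\alpha(\Omega)\leq\|f\|_{1,\alpha}/\lambda$, gets $C_\alpha^2$), which is immaterial.
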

	\begin{proof}
		 We define two families $\mathcal{Q}$ and $\cF$ of subsets of the decomposition $\cD$ of the tree presented in Lemma~\ref{lemdecomp}. Starting from $D_{0,0}=X$:
		\begin{enumerate}
			\item[1)] if 
			\[	\frac 1{\mu_\alpha(D_{k,m})}\sum_{z\in D_{k,m}}|f(z)|q^{-\alpha|z|}> \lambda,\]
			then we put $D_{k,m}\in\mathcal{Q}$ and we stop. Otherwise,
			\item[2a)] if $\# D_{k,m}=1$ then $D_{k,m}\in \cF$ and we stop;
			\item[2b)] if $\# D_{k,m}>1$ then for each set in the family \[D_{k,m+1}\cup\{D_{kq+j,m+1}\colon j\in{1,\dots q}\}\]
			we repeat the procedure, starting from 1).
		\end{enumerate}
		Observe that $X\not\in\cQ$ because $\lambda>(\mu_\alpha(X))^{-1}\|f\|_{1,\alpha}$.
		We denote by $\Omega$ and $F$ the (disjoint) union of all the subsets in $\cQ$ and $\cF$, respectively. The sets $\Omega$ and $F$ clearly satisfy (i) and (ii).
		We prove that, for every $Q\in\cQ$,
		\begin{equation}\label{tipot}
			\lambda<  \frac 1{\mu_\alpha(Q)}\sum_{z\in Q}|f(z)|q^{-\alpha|z|}\leq C_\alpha \lambda,\qquad Q\in\cQ.
		\end{equation}
		For every $Q=D_{k,m}\in \cQ$, we have $m>0$ since $X\not\in\cQ$ and we put $\tilde{Q}=D_{k',m-1}$,
		where $k'$ is defined in (iv) of Lemma~\ref{lemdecomp}. Observe that $\tilde{Q}\not\in\cQ$ and that, by Lemma~\ref{lemdecomp}, $\mu_\alpha(\tilde{Q})\leq C_\alpha\mu_\alpha(Q)$. Then we have that
		\[\frac{1}{\mu_\alpha(Q)}\sum_{z\in Q}|f(z)|q^{-\alpha|z|}\leq \frac{\mu_\alpha(\tilde Q)}{\mu_\alpha(Q)}\frac{1}{\mu_\alpha(\tilde Q)}\sum_{z\in \tilde Q}|f(z)|q^{-\alpha|z|}\leq C_\alpha \lambda,\]
		which gives~\eqref{tipot}.
		It is easy to see that 
		\begin{equation}\label{Omegaminore}
			\mu_\alpha(\Omega)\leq \frac{1}{\lambda}\sum_{Q\in\cQ}\sum_{x\in Q}\frac{1}{\mu_\alpha(Q)}\left(\sum_{z\in Q}|f(z)|q^{-\alpha|z|}\right)q^{-\alpha|x|}\leq \frac{\|f\|_{1,\alpha}}{\lambda}.
		\end{equation}
		
		We now define $b=f-g$, where
		\begin{equation*}
			g(z)=\begin{dcases*}
				f(z),&$z\in F$,\\
				\frac1{\mu_\alpha(Q)}\sum_{x\in Q}f(x)q^{-\alpha|x|},&$z\in Q,\,Q\in\cQ$.
			\end{dcases*}
		\end{equation*}
		It is obvious that $\supp\,b\subseteq\Omega$. We show next that $\|g\|_{2,\alpha}^2\leq (1+C_\alpha^2)\lambda\|f\|_{1,\alpha}$. Indeed, by~\eqref{tipot},
		\begin{align*}
			\|g\|_{2,{\alpha}}^2&=\sum_{z\in F}|g(z)|^2q^{-\alpha|z|}+\sum_{z\in \Omega}|g(z)|^2q^{-\alpha|z|}\\
			&=\sum_{z\in F}|f(z)|^2q^{-\alpha|z|}+\sum_{Q\in\cQ}\sum_{z\in Q}\left|\frac1{\mu_\alpha(Q)}\sum_{x\in Q}f(x)q^{-\alpha|x|}\right|^2q^{-\alpha|z|}\\
			&\leq\sum_{z\in F}\lambda|f(z)|q^{-\alpha|z|}+\mu_\alpha(\Omega) C_\alpha^2\lambda^2\leq
			(1+C_\alpha^2)\lambda\|f\|_{1,\alpha}<+\infty,
		\end{align*}
		where we used~\eqref{Omegaminore}.
		The fact that $b_Q=b\mathbbm{1}_Q$, $Q\in\cQ$, has vanishing mean on $Q$ follows by construction. Furthermore, since $|b(z)|\leq |f(z)|+|g(z)|$ we have
		\begin{align*}
			\sum_{Q\in \cQ}\sum_{z\in Q}|b_Q(z)|q^{-\alpha|z|}&\leq \sum_{z\in \Omega}|f(z)|q^{-\alpha|z|}+ \sum_{Q\in \cQ}\sum_{z\in Q}|g(z)|q^{-\alpha|z|}\\
			&\leq \|f\|_{1,\alpha}+\mu_\alpha(\Omega)C_\alpha \lambda\lesssim \|f\|_{1,\alpha},
		\end{align*}
		by~\eqref{Omegaminore}.
	\end{proof}

	\section{$H^1$ and BMO spaces}
	In this section we define $H^1$ and BMO spaces associated to $\mu_\alpha$, $\alpha>1$. A classical reference  for the theory on these classes of spaces is~\cite{coifweiss} to which we refer for the proof of some of the classical results. See~\cite{arditti1},~\cite{arditti2},~\cite{feneuil},~\cite{kpt}, and~\cite{russ} for a theory on discrete sets.
	
	We start by defining $(1,p)$-atoms and consequently the atomic Hardy space $H^{1,p}_\alpha$.
	\begin{definition}
	Let $1< p\leq \infty$. A function $a$ is a $(1,p)$-atom if either  $a=\mu_\alpha(X)^{-1}$ or 
	\begin{enumerate}
		\item $a$ is supported in $D$ for some $D\in\cD$;
		\item $\|a\|_{p,\alpha}\leq\mu_\alpha(D)^{\frac{1}{p}-1}$ ($\|a\|_{\infty,\alpha}\leq\mu_\alpha(D)^{-1}$, if $p=\infty$);
		\item the mean of $a$ on $D$ vanishes, that is
		\[\sum_{x\in D}a(x)q^{-\alpha|x|}=0.\]
	\end{enumerate}
	\end{definition}
	\begin{definition}
		We define the space $H^{1,p}_\alpha$ as the space of the functions $g\in L^1_\alpha$ such that 
		\[ g=\sum_{j}\lambda_ja_j,\]
		where $a_j$ are $(1,p)$-atoms and $\lambda_j\in\C$ is a summable sequence. We set
		\[\|g\|_{H^{1,p}_\alpha}:=\inf\left\{\sum_j|\lambda_j|\colon g=\sum_j\lambda_ja_j ,\,\,a_j\,\,(1,p)-\text{atoms}\right\}. \] 
		
	\end{definition}

	For every function $f$ on the tree and $D\in\cD$, we denote by $f_D$ the average of $f$ on $D$, that is 
	\[f_{D}=\frac{1}{\mu_\alpha(D)}\sum_{x\in D}f(x)q^{-\alpha|x|}.\]
	\begin{definition}
		Let $1\leq r<\infty$. We define ${\rm BMO}_{r,\alpha}$ as the space of all the functions $f\colon X\to\C$ such that 
		\[\|f\|_{{\rm BMO}_{r,\alpha}}:=\sup_{D\in \cD}\left(\frac{1}{\mu_\alpha(D)}\sum_{x\in D}|f(x)-f_D|^rq^{-\alpha|x|}\right)^\frac{1}{r} +\left|\sum_{x\in X}f(x)q^{-\alpha|x|}\right|\]
		quotiented over the constant functions.
	\end{definition}
	It is easy to check that $({\rm BMO}_{r,\alpha},\|\cdot\|_{{\rm BMO}_{r,\alpha}})$ is a Banach space. Furthermore, BMO spaces are in general inboxed, namely ${\rm BMO}_{r,\alpha}\subseteq{\rm BMO}_{1,\alpha}$ for every $1\leq r<\infty$. Indeed by H\"older inequality, if $E=\sect{v}$,
	\[\sum_{x\in\sect{v}}|f(x)-f_{\sect v}|q^{-\alpha|x|}\leq \mu_\alpha(\sect{v})^ {1-\frac{1}{r}} \sum_{x\in\sect{v}}|f(x)-f_{\sect v}|^rq^{-\alpha|x|},\]
	and hence $\|f\|_{{\rm BMO}_{1,\alpha}}\leq\|f\|_{{\rm BMO}_{r,\alpha}}$.
	
	By the fact that $(X,\rho,\mu_\alpha)$ is doubling we can apply Theorem A of~\cite{coifweiss}. Hence we have that $H_\alpha^{1,p}=H_\alpha^{1,\infty}$ as vector spaces and their norms are equivalent for every $1<p<\infty$.
	
	\begin{proposition}[Theorem B~\cite{coifweiss}\label{propduale}]
			Let $1<p\leq \infty$. For every $\Phi\in(H_\alpha^{1,p})^*$ there exists a function $f\in{\rm BMO}_{p',\alpha}$ such that $\|\Phi\|_{(H_{\alpha}^{1,p})^*}\asymp\|f\|_{{\rm BMO}_{p',\alpha}}$ and for every $(1,p)$-atom $a$ we have
			\begin{equation}\label{dualoper}
				\Phi(a)=\sum_{x\in X}f(x)a(x)q^{-\alpha|x|}.
			\end{equation}
			On the other hand, for every $f\in{\rm BMO}_{p',\alpha}$, the functional defined as in~\eqref{dualoper} on $(1,p)$-atoms extends to a unique linear bounded functional on $H_\alpha^{1,p}$ whose norm is equivalent to $\|f\|_{{\rm BMO}_{p',\alpha}}$.
	\end{proposition}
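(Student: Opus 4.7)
The plan is to follow the classical Coifman-Weiss duality argument (Theorem~B of~\cite{coifweiss}), adapted to the dyadic family $\cD$ from Lemma~\ref{lemdecomp} which plays the role of cubes in the Euclidean setting. I treat the two containments separately.

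For the easy half, given $f\in{\rm BMO}_{p',\alpha}$ I define $\Phi$ on each $(1,p)$-atom by~\eqref{dualoper} and extend by linearity over atomic decompositions. If $a$ is a non-constant atom supported on $D\in\cD$, the vanishing-mean property allows subtracting $f_D$ for free, and H\"older's inequality with exponents $p',p$ combined with the size condition of $a$ yields $|\Phi(a)|\leq\|f-f_D\|_{L^{p'}_\alpha(D)}\|a\|_{p,\alpha}\leq\|f\|_{{\rm BMO}_{p',\alpha}}$. The constant atom is controlled by $\mu_\alpha(X)^{-1}$ times the additive term in the BMO norm, and taking infima over atomic decompositions gives $\|\Phi\|_{(H^{1,p}_\alpha)^*}\lesssim\|f\|_{{\rm BMO}_{p',\alpha}}$.

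For the harder half, assume first $1<p<\infty$. The key observation is that every $h\in L^p_\alpha$ supported on a fixed $D\in\cD$ with $\sum_Dhq^{-\alpha|\cdot|}=0$ is a scalar multiple of a $(1,p)$-atom, namely $h=\mu_\alpha(D)^{1/p'}\|h\|_{p,\alpha}a$; hence the natural inclusion of the mean-zero subspace $L^p_{\alpha,0}(D)$ into $H^{1,p}_\alpha$ is continuous with norm at most $\mu_\alpha(D)^{1/p'}$. Since $X\in\cD$, restricting $\Phi$ to $L^p_{\alpha,0}(X)$, extending via Hahn-Banach to $L^p_\alpha$, and invoking the Riesz duality $(L^p_\alpha)^*\cong L^{p'}_\alpha$ produces a global representative $f\in L^{p'}_\alpha$ satisfying~\eqref{dualoper} on all mean-zero functions. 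For each $D\in\cD$, the functional $h\mapsto\sum_{x\in D}(f(x)-f_D)h(x)q^{-\alpha|x|}$ on $L^p_{\alpha,0}(D)$ agrees with $\Phi$ there; by the duality $L^p_{\alpha,0}(D)^*\cong L^{p'}_\alpha(D)/\C$ its operator norm equals $\inf_{c\in\C}\|f-c\|_{L^{p'}_\alpha(D)}$, and a triangle inequality plus H\"older shows $\|f-f_D\|_{L^{p'}_\alpha(D)}\leq 2\inf_c\|f-c\|_{L^{p'}_\alpha(D)}$; dividing by $\mu_\alpha(D)^{1/p'}$ yields the $p'$-oscillation bound $2\|\Phi\|_{(H^{1,p}_\alpha)^*}$. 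Testing $\Phi$ against the constant $(1,p)$-atom bounds the additive BMO term by $\mu_\alpha(X)\|\Phi\|_{(H^{1,p}_\alpha)^*}$.

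The case $p=\infty$ circumvents the failure of $L^\infty$ duality: by the equivalence $H^{1,p}_\alpha=H^{1,\infty}_\alpha$ stated just before the proposition, any $\Phi\in(H^{1,\infty}_\alpha)^*$ is also an element of $(H^{1,2}_\alpha)^*$, so the previous paragraph supplies $f\in{\rm BMO}_{2,\alpha}\subseteq{\rm BMO}_{1,\alpha}$ via the inbox inclusion. The only real obstacle I anticipate is the identification of the restricted operator norm with the quotient norm on $L^{p'}_\alpha(D)/\C$ and the factor-of-two estimate relating $\|f-f_D\|_{L^{p'}_\alpha(D)}$ to the best constant approximation; the patching of representatives is painless because $X\in\cD$ lets me produce a single global Riesz representative from the outset.
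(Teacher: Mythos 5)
The paper does not actually prove this proposition: it is quoted as Theorem~B of~\cite{coifweiss}, which applies because $(X,\rho,\mu_\alpha)$ is a space of homogeneous type and the family $\cD$ coincides with the family of nontrivial Gromov balls (singletons, sectors, and $X$ itself), so dyadic atoms are ball atoms. Your self-contained argument is exactly the classical Coifman--Weiss duality proof transplanted to this dyadic, finite-measure setting, and its structure is sound: H\"older plus the vanishing-mean property for the easy direction; the embedding of the mean-zero subspace $L^p_{\alpha,0}(D)$ into $H^{1,p}_\alpha$ with norm $\mu_\alpha(D)^{1/p'}$, Riesz representation starting from $D=X$, the identification $(L^p_{\alpha,0}(D))^*\cong L^{p'}_\alpha(D)/\C$, and the factor-of-two comparison between $\|f-f_D\|_{L^{p'}_\alpha(D)}$ and $\inf_c\|f-c\|_{L^{p'}_\alpha(D)}$ for the hard direction; reduction to $p=2$ for $p=\infty$ (legitimate, since every $(1,\infty)$-atom is a $(1,2)$-atom).

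Two small points should be made explicit. First, the representative $f$ produced by Hahn--Banach and Riesz duality satisfies~\eqref{dualoper} only on mean-zero atoms; it is determined up to an additive constant, and you must fix that constant so that $\mu_\alpha(X)^{-1}\sum_x f(x)q^{-\alpha|x|}=\Phi(\mu_\alpha(X)^{-1})$ before ``testing against the constant atom'' yields the bound $\mu_\alpha(X)\|\Phi\|_{(H^{1,p}_\alpha)^*}$ on the additive term $\bigl|\sum_x f(x)q^{-\alpha|x|}\bigr|$ of the BMO norm; this adjustment does not affect the oscillation part. Second, in the easy direction the extension of $\Phi$ from atoms to $H^{1,p}_\alpha$ should be phrased as: the pairing converges absolutely because $f\in L^{p'}_\alpha$ (take $D=X$ in the BMO norm and use $\mu_\alpha(X)<\infty$), so $\Phi$ is well defined and bounded on finite linear combinations of atoms, and one then extends by density; this also gives the uniqueness asserted in the statement.
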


	As consequence of the equivalence of $H_\alpha^{1,p}$ spaces and Proposition~\ref{propduale}, we have that ${\rm BMO}_{r,\alpha}={\rm BMO}_{1,\alpha}$ as vector spaces with equivalent norms for every $1<r<\infty$.
	In the sequel, we shall denote by $H_\alpha^1$ the space $H_\alpha^{1,\infty}$ and by ${\rm BMO}_{\alpha}$ the space ${\rm BMO}_{1,\alpha}$.

	\section{Complex interpolation}\label{secinter}
	In this section we show that the classical results for the complex interpolation involving $H^1$ and BMO spaces in the Euclidean setting hold also in this setting (see, for example,~\cite{cmm}, ~\cite{lstv} and~\cite{vallax+b}).
	
	We  start by defining the sharp maximal function $M^\sharp$ of $f\colon X\to \C$ as 
	\[M^\sharp f(x):=\sup_{\substack{D\in\cD\\x\in D}}\frac{1}{\mu_\alpha(D)}\sum_{y\in D}|f(y)-f_D|q^{-\alpha|y|},\qquad x\in X.\]
	The next technical result is usually known as \lq\lq good lambda inequality\rq\rq\,and it is crucial in the proof of complex interpolation results.
	\begin{proposition}\label{propgoodlambda}
		There exists $C'>0$ such that for every $\gamma>0,\,\lambda>0$ and $f\colon X\to\C$ we have
		\begin{equation}\label{goodlambda}
			\mu_\alpha(\{x\in X\colon Mf(x)>2\lambda,\,M^\sharp f(x)<\gamma\lambda\})\leq C'\gamma\mu_\alpha(\{x\in X\colon Mf(x)>\lambda\}). 
		\end{equation}
		\end{proposition}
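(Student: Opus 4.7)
The plan is to mimic the Fefferman--Stein good lambda argument, replacing Euclidean dyadic cubes with the family $\cD$ of Lemma~\ref{lemdecomp}. The two analytic ingredients are the weak type $(1,1)$ bound for $M$ from Proposition~\ref{hlweak} and the parent--child comparison $\mu_\alpha(\tilde D)\leq C_\alpha\mu_\alpha(D)$ supplied by item~(iii) of Lemma~\ref{lemdecomp}.

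Assuming that $\Omega_\lambda:=\{Mf>\lambda\}$ is a proper subset of $X$, I would first extract, as in the weak type $(1,1)$ argument in the proof of Proposition~\ref{hlweak}, maximal dyadic sets $Q_i\in\cD$ with $\mu_\alpha(Q_i)^{-1}\sum_{z\in Q_i}|f(z)|q^{-\alpha|z|}>\lambda$, so that $\Omega_\lambda=\bigsqcup_iQ_i$. Since each $Q_i\subsetneq X$, it admits a parent $\tilde Q_i\in\cD$ for which the extraction criterion failed; therefore
\[
|f_{\tilde Q_i}|\leq\frac{1}{\mu_\alpha(\tilde Q_i)}\sum_{z\in\tilde Q_i}|f(z)|q^{-\alpha|z|}\leq\lambda,\qquad \mu_\alpha(\tilde Q_i)\leq C_\alpha\mu_\alpha(Q_i).
\]

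The localization step is the heart of the argument. By the maximality of $Q_i$ together with the nested structure of $\cD$, any $D\in\cD$ containing a point $x\in Q_i$ whose average of $|f|$ exceeds $2\lambda$ must already be contained in $Q_i$: a strictly larger dyadic ancestor of $Q_i$ has average at most $\lambda<2\lambda$. Setting $g_i:=(f-f_{\tilde Q_i})\mathbbm{1}_{\tilde Q_i}$, the triangle inequality and $|f_{\tilde Q_i}|\leq\lambda$ give, for such $D\subseteq Q_i\subseteq\tilde Q_i$,
\[
\frac{1}{\mu_\alpha(D)}\sum_{z\in D}|g_i(z)|q^{-\alpha|z|}\geq\frac{1}{\mu_\alpha(D)}\sum_{z\in D}|f(z)|q^{-\alpha|z|}-|f_{\tilde Q_i}|>\lambda,
\]
so $Mg_i(x)>\lambda$ at every such $x$. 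Applying the weak $(1,1)$ bound of Proposition~\ref{hlweak} to $g_i$ yields
\[
\mu_\alpha\bigl(\{x\in Q_i\colon Mf(x)>2\lambda\}\bigr)\leq\mu_\alpha(\{Mg_i>\lambda\})\leq \frac{c}{\lambda}\|g_i\|_{1,\alpha}.
\]

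Finally I would bring in the sharp function. If no $x_0\in Q_i$ satisfies $M^\sharp f(x_0)<\gamma\lambda$, then $Q_i$ contributes nothing to the left hand side of~\eqref{goodlambda}. Otherwise, testing the supremum defining $M^\sharp f(x_0)$ against the dyadic set $\tilde Q_i\ni x_0$ and invoking the parent--child bound above produces
\[
\|g_i\|_{1,\alpha}=\sum_{z\in\tilde Q_i}|f(z)-f_{\tilde Q_i}|q^{-\alpha|z|}\leq M^\sharp f(x_0)\,\mu_\alpha(\tilde Q_i)<\gamma\,\lambda\,C_\alpha\,\mu_\alpha(Q_i).
\]
Combining these two estimates and summing over $i$ yields~\eqref{goodlambda} with $C'=cC_\alpha$. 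The step I expect to be most delicate is the boundary case $\Omega_\lambda=X$, in which no parent $\tilde Q$ is available; this must be handled either by restricting $\lambda$ above the natural threshold $\|f\|_{1,\alpha}/\mu_\alpha(X)$ appearing in Proposition~\ref{caldzyg}, or by absorbing the degenerate situation into a direct bound using $|f_X|\leq\mu_\alpha(X)^{-1}\|f\|_{1,\alpha}$.
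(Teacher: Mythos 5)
Your argument is essentially the paper's own: extract the maximal dyadic sets $Q_i$ on which the average of $|f|$ exceeds $\lambda$, observe by maximality and nestedness of $\cD$ that the maximal function of $f$ at points of $Q_i$ where it exceeds $2\lambda$ is witnessed only by dyadic sets inside $Q_i$, subtract the average over the parent $\tilde Q_i$ (which is at most $\lambda$ in modulus), apply the weak type $(1,1)$ bound of Proposition~\ref{hlweak} to the mean-corrected function, and convert $\sum_{z\in\tilde Q_i}|f(z)-f_{\tilde Q_i}|q^{-\alpha|z|}$ into $\gamma\lambda C_\alpha\mu_\alpha(Q_i)$ via the sharp function at a point of $Q_i$ and the parent--child comparison; the only (harmless, in fact slightly cleaner) deviation is that you support $g_i$ on $\tilde Q_i$ rather than on $Q_i$. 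The boundary case you flag is real but is not something you are missing from the paper: the paper's proof also silently assumes $D_x\subsetneq X$ when it invokes the parent $D_x'$, and indeed \eqref{goodlambda} as stated fails for a nonzero constant function (for which $Mf\equiv|c|$ and $M^\sharp f\equiv 0$) once $\lambda<|c|/2$ and $\gamma$ is small, so of your two proposed remedies only the first works: one must restrict to $\lambda>\|f\|_{1,\alpha}/\mu_\alpha(X)$ (or incorporate the global average into $M^\sharp$, as is done in the finite-measure setting of~\cite{cmmfinite}); a direct bound via $|f_X|$ cannot rescue the stated inequality in that regime.
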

	\begin{proof}
		Let $\Omega_\lambda=\{x\in X\colon Mf(x)>\lambda\}$. For every $x\in \Omega_\lambda$ there exists a $D_x\in\cD$ that is maximal with respect to the inclusion such that
		\[\frac{1}{\mu_\alpha(D_x)}\sum_{y\in D_x}|f(y)|q^{-\alpha|y|}>\lambda. \]
		Since $D_y=D_x$ for every $y\in D_x$, it is sufficient to prove~\eqref{goodlambda} restricted to $D_x$, namely
		\begin{align}\label{goodlambda2}
			\mu_\alpha(\{x\in D_x\colon Mf(x)>2\lambda,\,M^\sharp f(x)<\gamma\lambda\})&\leq C' \gamma \mu_\alpha(\{x\in D_x\colon Mf(x)>\lambda\})\nonumber\\
			&=C' \gamma \mu_\alpha(D_x).
		\end{align}
	Fix $x\in \Omega_\lambda$ and $y\in D_x$ such that $Mf(y)>2\lambda$, then the supremum
	\[\sup_{\substack{D\in\cD\\y\in D}}\frac{1}{\mu_\alpha(D)}\sum_{z\in D}|f(z)|q^{-\alpha|z|}= \sup_{\substack{D\in\cD\\D\subseteq D_x\wedge D_x\subseteq D}}\frac{1}{\mu_\alpha(D)}\sum_{z\in D}|f(z)|q^{-\alpha|z|}.\]
	If $D_x\subseteq D$, then by definition of $D_x$
	\[\frac{1}{\mu_\alpha(D)}\sum_{z\in D}|f(z)|q^{-\alpha|z|}\leq \lambda,\]
	and then $Mf(x)=M(f\mathbbm{1}_{D_{x}})(x)$. Let $D_x'\in\cD$ be the smaller dyadic set such that $D_x\subsetneq D_x'$ (see Lemma~\ref{lemdecomp}), then
	\begin{align*}
		M((f-f_{D_x'})\mathbbm{1}_{D_x})(x)&=M(f\mathbbm{1}_{D_{x}})(x)-|f_{D_x'}|\\
	&\geq Mf(x)-\frac{1}{\mu_\alpha(D_x')}\sum_{y\in D_x'}|f(y)|q^{-\alpha|y|}>2\lambda-\lambda=\lambda. 
	\end{align*}
	Hence
	\[\mu_\alpha(\{y\in D_x\colon Mf(y)>2\lambda\})\leq \mu_\alpha(\{y\in D_x\colon M((f-f_{D_x'})\mathbbm{1}_{D_x})(y)>\lambda\}). \]
	By Proposition~\ref{hlweak}, $M$ is of weak type (1,1) and, if we denote by $\|M\|$ its operatorial norm as operator from $L_\alpha^1$ to the Lorentz space $L_\alpha^{1,\infty}$, then
	\begin{align*}
		\mu_\alpha(\{y\in D_x\colon Mf(y)>2\lambda\})&\leq\frac{\|M\|}{\lambda}\|(f-f_{D_x'})\mathbbm{1}_{D_x}\|_{1,\alpha} \\
		& =\frac{\|M\|}{\lambda}\sum_{y\in D_x}|f(y)-f_{D_x'}|q^{-\alpha|y|} \\
		& \leq\frac{\|M\|}{\lambda}C_\alpha\frac{\mu_\alpha(D_x)}{\mu_\alpha(D_x')}\sum_{y\in D_x'}|f(y)-f_{D_x'}|q^{-\alpha|y|} \\
		& \leq\frac{\|M\|}{\lambda}C_\alpha\mu_\alpha(D_x)M^\sharp f(\xi_x),
	\end{align*}
	for some $\xi_x\in D_x$. If $\xi_x\in D_x$ is such that $M^\sharp f(\xi_x)\leq \gamma\lambda$, then
	\begin{align*}
		\mu_\alpha(\{y\in D_x\colon Mf(y)>2\lambda,\,M^\sharp f(y)\leq\gamma\lambda\})&\leq \frac{\|M\|}{\lambda}C_\alpha\mu_\alpha(D_x)M^\sharp f(\xi_x)\\
		&\leq \gamma\|M\|C_\alpha\mu_\alpha(D_x),
	\end{align*}
	that is~\eqref{goodlambda2}, as required.
	\end{proof}
	As consequence of Proposition~\ref{propgoodlambda} we have the following result.
	\begin{theorem}
		Let $1\leq p_0<\infty$. For every $p_0\leq p<\infty$, there exists $N_p>0$ such that for every $f\colon X\to \C$ with $Mf\in L^{p_0}_\alpha$ we have
		\begin{enumerate}
			\item $\|Mf\|_{p,\alpha}\leq N_p\|M^\sharp f\|_{p,\alpha}$;
			\item $\|f\|_{p,\alpha}\leq N_p\|M^\sharp f\|_{p,\alpha}$.
		\end{enumerate}
	\end{theorem}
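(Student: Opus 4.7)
The plan is to prove (i) by the standard Fefferman--Stein distribution function argument exploiting the good lambda inequality of Proposition~\ref{propgoodlambda}, and then to deduce (ii) as an immediate consequence of (i). For (ii), the key observation is that by property (iv) of Lemma~\ref{lemdecomp}, for every $x\in X$ the singleton $\{x\}$ belongs to $\cD$, and the average of $|f|$ on $\{x\}$ with respect to $\mu_\alpha$ equals exactly $|f(x)|$; hence $|f(x)|\le Mf(x)$ pointwise on $X$, so that $\|f\|_{p,\alpha}\le\|Mf\|_{p,\alpha}$ and (ii) follows from (i).

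For (i), I would start from the layer cake representation
\[\|Mf\|_{p,\alpha}^p=p\int_0^\infty\lambda^{p-1}\mu_\alpha(\{Mf>\lambda\})\,\de\lambda,\]
perform the change of variable $\lambda=2t$, and split
\[\mu_\alpha(\{Mf>2t\})\le\mu_\alpha(\{Mf>2t,\,M^\sharp f<\gamma t\})+\mu_\alpha(\{M^\sharp f\ge\gamma t\}).\]
Proposition~\ref{propgoodlambda} bounds the first summand by $C'\gamma\mu_\alpha(\{Mf>t\})$, while a second application of the layer cake formula bounds the contribution of the second summand to the integral by $\gamma^{-p}\|M^\sharp f\|_{p,\alpha}^p$. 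Choosing $\gamma$ small enough that $2^pC'\gamma<1/2$ (for instance $\gamma=(2^{p+1}C')^{-1}$), one then wants to absorb the $\|Mf\|_{p,\alpha}^p$ term reappearing on the right-hand side into the left.

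The main obstacle is precisely this absorption: it is only legitimate if $\|Mf\|_{p,\alpha}^p$ is already known to be finite, which is not automatic and is what forces the hypothesis $Mf\in L^{p_0}_\alpha$ to enter. I would resolve this by truncation: for $N>0$ set $g_N:=\min(Mf,N)$, which satisfies $\|g_N\|_{p,\alpha}^p\le N^p\mu_\alpha(X)<+\infty$ since $\mu_\alpha(X)<+\infty$. Rerunning the argument with integration cut off at $N/2$ and using that $\{g_N>\lambda\}=\{Mf>\lambda\}$ for $\lambda<N$ and is empty otherwise yields $\|g_N\|_{p,\alpha}^p\le N_p^p\|M^\sharp f\|_{p,\alpha}^p$ with $N_p$ independent of $N$. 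The hypothesis $Mf\in L_\alpha^{p_0}$ ensures that $Mf(x)<+\infty$ for every $x\in X$ (every vertex has positive $\mu_\alpha$-measure), so that $g_N\nearrow Mf$ pointwise everywhere; monotone convergence then lets $N\to\infty$ and produces (i). The rest is just bookkeeping of the constants.
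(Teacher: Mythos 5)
Your proposal is correct and follows essentially the same route as the paper: the Fefferman--Stein layer-cake argument with the good lambda inequality of Proposition~\ref{propgoodlambda}, a truncation to make the absorption legitimate (your $\|g_N\|_{p,\alpha}^p$ is exactly the paper's truncated integral $\cJ_N$), the choice $\gamma\asymp(2^{p}C')^{-1}$, and $|f|\le Mf$ for part (ii). The only cosmetic difference is that you justify finiteness of the truncated quantity via $\mu_\alpha(X)<+\infty$, whereas the paper uses the hypothesis $Mf\in L^{p_0}_\alpha$; both work here.
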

	\begin{proof}
		For every $N>0$ we set
		\[\cJ_N:=\int_0^Np\lambda^{p-1}\mu_\alpha(\{x\in X\colon Mf(x)>\lambda \})\de\lambda.\]
		Observe that for every $N$, $\cJ_N$ is finite, indeed
		\begin{align*}
			\cJ_N&= \int_0^Np\lambda^{p_0-1}\frac{\lambda^{p-1}}{\lambda^{p_0-1}}\mu_\alpha(\{x\in X\colon Mf(x)>\lambda \})\de\lambda\\
			&\leq N^{p-p_0}\frac{p}{p_0}\int_0^Np_0\lambda^{p_0-1}\mu_\alpha(\{x\in X\colon Mf(x)>\lambda \})			\de\lambda\\
			&\leq N^{p-p_0}\frac{p}{p_0}\|Mf\|_{p_0,\alpha}^{p_0}<+\infty.
		\end{align*}
		Now observe that, by changing the variable $\lambda\to\frac{\lambda}{2}$ and using Proposition~\ref{propgoodlambda},
		\begin{align*}
			\cJ_N=2^p&\int_{0}^{\frac N2}p\lambda^{p-1}\mu_\alpha(\{x\in X\colon Mf(x)>2\lambda \})\de\lambda\\
			=2^p&\int_{0}^{\frac N2}p\lambda^{p-1}\mu_\alpha(\{x\in X\colon Mf(x)>2\lambda,\,M^\sharp f(x)\leq\gamma\lambda\})\de\lambda\\
			&+2^p\int_{0}^{\frac N2}p\lambda^{p-1}\mu_\alpha(\{x\in X\colon Mf(x)>2\lambda,\,M^\sharp f(x)>\gamma\lambda\})\de\lambda\\
			\leq 2^p&\gamma\|M\|C_\alpha\int_{0}^{\frac N2}p\lambda^{p-1}\mu_\alpha(\{x\in X\colon Mf(x)>\lambda\})\de\lambda\\
			&+2^p\int_{0}^{\frac N2}p\lambda^{p-1}\mu_\alpha(\{x\in X\colon Mf(x)>2\lambda,\,M^\sharp f(x)>\gamma\lambda\})\de\lambda\\
			\leq 2^p&\gamma\|M\|C_\alpha\cJ_N+\frac{2^p}{\gamma^p}\int_{0}^{\frac {N\gamma}2}p\lambda^{p-1}\mu_\alpha(\{x\in X\colon M^\sharp f(x)>\lambda\})\de\lambda,
		\end{align*}
	by applying the change $\lambda\to\gamma\lambda$.
	Now, by choosing $\gamma^{-1}=2^{p+1}\|M\|C_\alpha$, we have that 
	\[\frac 12\cJ_N\leq \frac{2^p}{\gamma^p}\int_{0}^{\frac {N\gamma}2}p\lambda^{p-1}\mu_\alpha(\{x\in X\colon M^\sharp f(x)>\lambda\})\de\lambda,\]
	that is
	\[\cJ_N\leq \frac{2^{p+1}}{\gamma^p}\int_{0}^{\frac {N\gamma}2}p\lambda^{p-1}\mu_\alpha(\{x\in X\colon M^\sharp f(x)>\lambda\})\de\lambda.\]
	It is sufficient to notice that when $N\to+\infty$ the LHS is $\|Mf\|_{\alpha,p}^p$ and the RHS is $\|M^\sharp f\|_{\alpha,p}^p$. Hence (i) follows with $N_p=2^\frac{p+1}{p}\gamma^{-1}$.
		
		Finally, (ii) follows from $|f|\leq Mf$.
	\end{proof}

	We are now in a position to study complex interpolation spaces involving $H^1_\alpha$ and ${\rm BMO}_\alpha$. To do so, we shall recall some basic notions of complex interpolation. We refer the reader to~\cite{berghlof} for more details.

	Let $A_0,\,A_1$ be a pair of normed spaces and $\theta\in (0,1)$. We denote by $\Sigma\subseteq\C$ the complex strip formed by $z\in\C$ with $\Re(z)\in(0,1)$ and by $\overline{\Sigma}$ its closure. We define the space $\cF(A_0,A_1)$ of the functions $F\colon\overline{\Sigma}\to A_0+A_1$ such that
	\begin{enumerate}
		\item for every $\ell\in(A_0+A_1)^*$, the function $z\to\langle F(z),\ell\rangle_{(A_0+A_1)\times(A_0+A_1)^*}$ is continuous and bounded on $\overline{\Sigma}$ and analytic on $\Sigma$;
		\item $F(it)$ is bounded on $A_0$ for every $t\in\R$;
		\item $F(1+it)$ is bounded on $A_1$ for every $t\in\R$.
	\end{enumerate}
	The space $\cF(A_0,A_1)$ is equipped with the norm
	\[\|F\|_{\cF}=\sup_{t\in\R}\{\max(\|F(it)\|_{A_0},\|F(1+it)\|_{A_1}) \}<+\infty,\qquad F\in\cF. \]
	We define the (complex) interpolation space between $A_0$ and $A_1$ as
	\[(A_0,A_1)_{[\theta]}=\{f\in A_0+A_1\colon f=F(\theta),\text{ for some }F\in\cF(A_0,A_1) \},\]
	that is a normed space with the norm defined on $f\in (A_0,A_1)_{[\theta]}$ as
	\[\|f\|_{(A_0,A_1)_{[\theta]}}=\|f\|_{[\theta]}:=\inf\{\|F\|_{\cF}\colon F\in\cF(A_0,A_1),\,F(\theta)=f \}<+\infty. \]

	The following results represent the heart of the study of the complex interpolation between $H_\alpha^1$, ${\rm BMO}_\alpha$ and $L_\alpha^p$ spaces.
	\begin{theorem}\label{thmintcompl}
		The following interpolation results hold:
		\begin{enumerate}
			\item if $\theta\in(0,1)$ and $1<p_1<p<\infty$ such that $\frac{1}{p}=\frac{1-\theta}{p_1}$, then 
			\[(L_\alpha^{p_1},{\rm BMO}_\alpha)_{[\theta]}=L_\alpha^p;\]
			\item if $\theta\in(0,1)$ and $1<p<p_1<\infty$ such that $\frac{1}{p}=1-\theta+\frac{\theta}{p_1}$, then 
			\[(H_\alpha^1,L_\alpha^{p_1})_{[\theta]}=L_\alpha^p.\]
		\end{enumerate}
	\end{theorem}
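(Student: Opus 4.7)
The proof proceeds in two steps: establishing (i) by a direct construction together with the Fefferman--Stein type bound just proved, and deducing (ii) from (i) by duality. The three pillars of the argument are the inequality $\|f\|_{p,\alpha}\leq N_p\|M^\sharp f\|_{p,\alpha}$ just established, the classical Riesz--Thorin identity $(L^{p_1}_\alpha,L^\infty_\alpha)_{[\theta]}=L^p_\alpha$ under the exponent relation of part (i), and the continuous embedding $L^\infty_\alpha\hookrightarrow{\rm BMO}_\alpha$ (bounded functions have BMO seminorm at most twice their sup norm). For the inclusion $L^p_\alpha\subseteq(L^{p_1}_\alpha,{\rm BMO}_\alpha)_{[\theta]}$ in part (i), I would use the standard analytic family construction: for $f\in L^p_\alpha$ with $\|f\|_{p,\alpha}=1$, set $F(z)(x):=|f(x)|^{p(1-z)/p_1}\operatorname{sgn}(f(x))$. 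Then $F(\theta)=f$, $\|F(it)\|_{p_1,\alpha}=1$, and $|F(1+it)|\leq 1$, so $F\in\cF(L^{p_1}_\alpha,L^\infty_\alpha)\subseteq\cF(L^{p_1}_\alpha,{\rm BMO}_\alpha)$ with $\|F\|_\cF\lesssim\|f\|_{p,\alpha}$, yielding the containment.

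For the reverse inclusion $(L^{p_1}_\alpha,{\rm BMO}_\alpha)_{[\theta]}\subseteq L^p_\alpha$, observe that $M^\sharp$ enjoys natural endpoint bounds: $M^\sharp\colon L^{p_1}_\alpha\to L^{p_1}_\alpha$, since $M^\sharp f\leq 2Mf$ and Proposition~\ref{hlweak} gives the boundedness of $M$; and $M^\sharp\colon{\rm BMO}_\alpha\to L^\infty_\alpha$, directly from the definition of the BMO seminorm. Linearizing $M^\sharp$ at each point (for every $x\in X$ and $\varepsilon>0$ select a linear functional on $L^{p_1}_\alpha+{\rm BMO}_\alpha$ that attains $M^\sharp f(x)$ up to $\varepsilon$ with the same endpoint norms) and applying Stein's analytic interpolation theorem to the resulting family yields the boundedness of $M^\sharp$ from $(L^{p_1}_\alpha,{\rm BMO}_\alpha)_{[\theta]}$ into $(L^{p_1}_\alpha,L^\infty_\alpha)_{[\theta]}=L^p_\alpha$. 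Combined with $\|f\|_{p,\alpha}\leq N_p\|M^\sharp f\|_{p,\alpha}$ (invoked on a dense subclass where $Mf\in L^{p_1}_\alpha$ and then extended by density), this furnishes the reverse containment.

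Part (ii) then falls out by duality. Proposition~\ref{propduale} identifies $(H^1_\alpha)^*={\rm BMO}_\alpha$, and the reflexivity of $L^{p_1}_\alpha$ justifies Calder\'on's duality formula
\[(H^1_\alpha,L^{p_1}_\alpha)^*_{[\theta]}=({\rm BMO}_\alpha,L^{p_1'}_\alpha)_{[\theta]}=(L^{p_1'}_\alpha,{\rm BMO}_\alpha)_{[1-\theta]}.\]
A routine calculation gives $1/p'=\theta/p_1'$, so part (i) applied with parameters $(p_1',1-\theta)$ identifies this dual as $L^{p'}_\alpha$, and predualizing yields $(H^1_\alpha,L^{p_1}_\alpha)_{[\theta]}=L^p_\alpha$. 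The main obstacle in the whole plan is the sublinearity of $M^\sharp$: complex interpolation is naturally phrased for linear operators, and carrying out the pointwise linearization so that Stein's analytic interpolation theorem cleanly applies is the most delicate step.
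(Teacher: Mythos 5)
Your proposal is correct and follows essentially the same route as the paper: the easy inclusion via $L^p_\alpha=(L^{p_1}_\alpha,L^\infty_\alpha)_{[\theta]}$ together with $L^\infty_\alpha\hookrightarrow{\rm BMO}_\alpha$, the reverse inclusion by pointwise linearization of $M^\sharp$ (the paper realizes this concretely as the operators $S^{\varphi,\eta}$ indexed by a dyadic selection $\varphi$ and a unimodular weight $\eta$, bounded on $L^{p_1}_\alpha$ via $M^\sharp\leq 2M$ and from ${\rm BMO}_\alpha$ to $L^\infty_\alpha$, with constants independent of $\varphi,\eta$) followed by the Fefferman--Stein inequality, and part (ii) by duality. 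The only cosmetic difference is that Stein's analytic interpolation theorem is not needed: since each linearized operator is a single linear map with uniform endpoint bounds, applying it to the analytic family $F$ and using ordinary complex interpolation of that fixed operator suffices, exactly as the paper does.
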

	\begin{proof}
		First we observe that $L_\alpha^p\subseteq(L_\alpha^{q_1},{\rm BMO}_\alpha)_{[\theta]}$ follows from the continuous embedding of $L_\alpha^\infty$ in ${\rm BMO}_\alpha$ and from  $L_\alpha^p=(L_\alpha^{p_1},L_\alpha^\infty)_{[\theta]}$, see Theorem~5.1.1 in~\cite{berghlof}.
		
		We prove the other inclusion. For every function $\varphi\colon X\to\cD$ with $x\in\varphi(x)\in\cD$ and for every function $\eta\colon X\times X\to\C$ with $|\eta(x,y)|=1$ for every $(x,y)\in X\times X$, we define the operator
		\[S^{\varphi,\eta}f(x):=\frac{1}{\mu_\alpha(\varphi(x))}\sum_{y\in\varphi(x)}(f(y)-f_{\varphi(x)})\eta(y,x) q^{-\alpha|y|},\qquad f\colon X\to \C. \]
		It is easy to see that $|S^{\varphi,\eta}f|\leq M^\sharp f$ and that 
		\begin{equation}\label{supS}
			\sup_{\varphi,\eta}|S^{\varphi,\eta}f|=M^\sharp f. 
		\end{equation}
		For every $f\in (L_\alpha^{p_1},{\rm BMO}_\alpha)_{[\theta]}$, let $F\in\cF(L_\alpha^{p_1},{\rm BMO}_\alpha)$ be such that $F(\theta)=f$.
		For every $t\in\R$, by the boundedness of $M$ on $L^{p_1}$ ($p_1>1$) that we showed in Proposition~\ref{hlweak},
			\[\|S^{\varphi,\eta} (F(it))\|_{p_1}\leq\|M^\sharp(F(it))\|_{p_1} \lesssim\|M(F(it))\|_{p_1}\lesssim\|F(it)\|_{p_1}.\]
		 Furthermore, for every $t\in \R$, by the definition of the norm in ${\rm BMO}_\alpha$,
		\[\|S^{\varphi,\eta} (F(1+it))\|_{\infty}\leq\|M^\sharp(F(1+it))\|_{\infty} \leq\|F(1+it)\|_{{\rm BMO}_\alpha}.\]
		Hence $S^{\varphi,\eta}F\in\cF(L_\alpha^{p_1},L^\infty_\alpha)$ and 
		\[\|S^{\varphi,\eta}F\|_{\cF(L_\alpha^{p_1},L_\alpha^\infty)}\lesssim\|F\|_{\cF(L_\alpha^{p_1},{\rm BMO}_\alpha)}, \]
		and consequently, using $L_\alpha^p=(L_\alpha^{p_1},L_\alpha^\infty)_{[\theta]}$,
		\[\|S^{\varphi,\eta}F(\theta)\|_{p,\alpha}\lesssim \|F(\theta)\|_{(L_\alpha^{p_1},{\rm BMO}_\alpha)_{[\theta]}}. \]
		Then by passing at infimum on the set of $F\in\cF(L_\alpha^{p_1},{\rm BMO}_\alpha)$ with $F(\theta)=f$, we have that $\|S^{\varphi,\eta}F(\theta)\|_{p,\alpha}\lesssim \|f\|_{[\theta]}$.
		Finally, by Theorem~\ref{goodlambda} and by~\eqref{supS},
		\[\|f\|_{p,\alpha}\lesssim\|M^\sharp f\|_{p,\alpha}=\sup_{\varphi,\eta}\|S^{\varphi,\eta}\|_{p,\alpha}\lesssim\|f\|_{[\theta]},\]
		that proves $(L_\alpha^{p_1},{\rm BMO}_\alpha)_{[\theta]}\subseteq L_\alpha^p$.
		
		The case (ii) follows by duality.
	\end{proof}

	As a consequence of the previous theorem, we have the following interpolation result.
	\begin{corollary}\label{corintcom}
		Let $\theta\in(0,1)$ and $\frac{1}{p}=1-\theta$. Then the following hold:
		\begin{enumerate}
			\item $(L_\alpha^1,{\rm BMO}_\alpha)_{[\theta]}=L_\alpha^p$;
			\item $(H_\alpha^1,L_\alpha^\infty)_{[\theta]}=L_\alpha^p$;
			\item $(H_\alpha^1,{\rm BMO}_\alpha)_{[\theta]}=L_\alpha^p$;
		\end{enumerate}
	\end{corollary}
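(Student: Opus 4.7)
The plan is to deduce all three endpoint identities from Theorem~\ref{thmintcompl} by invoking Wolff's reiteration theorem (see, for instance, Wolff's theorem in the theory of complex interpolation of Banach couples). Recall that Wolff's theorem says the following: given a compatible couple $(A_0,A_3)$ with intermediate spaces $A_1,A_2$, if there exist $\alpha,\beta\in(0,1)$ such that
\[
A_1=(A_0,A_2)_{[\alpha]},\qquad A_2=(A_1,A_3)_{[\beta]},
\]
then $A_1$ and $A_2$ are themselves complex interpolation spaces of $(A_0,A_3)$, with parameters
\[
\theta_1=\frac{\alpha\beta}{1-\alpha+\alpha\beta},\qquad \theta_2=\frac{\beta}{1-\alpha+\alpha\beta}.
\]

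\emph{Proof of (i).} Fix any $1<p_0<p<\infty$, and apply Wolff to the quadruple $A_0=L^1_\alpha$, $A_1=L^{p_0}_\alpha$, $A_2=L^p_\alpha$, $A_3={\rm BMO}_\alpha$. The relation $A_1=(A_0,A_2)_{[\alpha]}$ is the classical Riesz--Thorin identity $L^{p_0}_\alpha=(L^1_\alpha,L^p_\alpha)_{[\alpha]}$ (Theorem~5.1.1 in~\cite{berghlof}), while $A_2=(A_1,A_3)_{[\beta]}$ is Theorem~\ref{thmintcompl}(i). A direct algebraic computation of Wolff's output parameter $\theta_2=\beta/(1-\alpha+\alpha\beta)$ in terms of $p_0$ and $p$ collapses (the $p_0$'s cancel) to $\theta_2=1-1/p$, which is exactly the $\theta$ of the Corollary.

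\emph{Proof of (ii).} This is the dual situation. Fix $1<p<p_1<\infty$ and apply Wolff with $A_0=H^1_\alpha$, $A_1=L^p_\alpha$, $A_2=L^{p_1}_\alpha$, $A_3=L^\infty_\alpha$. Here $A_1=(A_0,A_2)_{[\alpha]}$ is Theorem~\ref{thmintcompl}(ii), and $A_2=(A_1,A_3)_{[\beta]}$ is classical. Again the same parameter computation gives $\theta_1=1-1/p$, yielding $L^p_\alpha=(H^1_\alpha,L^\infty_\alpha)_{[\theta]}$. Alternatively, one could obtain (ii) from (i) by duality, using Proposition~\ref{propduale} together with the duality theorem for complex interpolation (Theorem~4.5.1 in~\cite{berghlof}).

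\emph{Proof of (iii).} Apply Wolff one more time, now with $A_0=H^1_\alpha$, $A_1=L^{p_0}_\alpha$, $A_2=L^p_\alpha$, $A_3={\rm BMO}_\alpha$, for an auxiliary choice $1<p_0<p<\infty$. Here $A_1=(A_0,A_2)_{[\alpha]}$ is Theorem~\ref{thmintcompl}(ii) and $A_2=(A_1,A_3)_{[\beta]}$ is Theorem~\ref{thmintcompl}(i). The same cancellation as in (i) gives $\theta_2=1-1/p$, proving $L^p_\alpha=(H^1_\alpha,{\rm BMO}_\alpha)_{[\theta]}$.

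The only substantive work is the arithmetic verification that Wolff's output parameter is independent of the auxiliary exponent ($p_0$ or $p_1$) and equals $1-1/p$; the main conceptual obstacle is simply ensuring that the hypotheses of Wolff's reiteration (compatibility of the couples and the intermediate-space structure) are legitimate in this setting, which is standard once we quotient ${\rm BMO}_\alpha$ by constants and use the continuous embeddings $H^1_\alpha\hookrightarrow L^1_\alpha$ and $L^\infty_\alpha\hookrightarrow{\rm BMO}_\alpha$.
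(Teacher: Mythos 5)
Your argument for parts (i) and (ii) is essentially the paper's own: the paper also deduces them from Theorem~\ref{thmintcompl} via Wolff's reiteration theorem (Theorem~2 in~\cite{wolff}), inserting an auxiliary Lebesgue exponent and combining one identity from Theorem~\ref{thmintcompl} with the classical identity $(L_\alpha^{p_1},L_\alpha^{\infty})_{[\theta]}=L_\alpha^p$ from~\cite{berghlof}; the only cosmetic difference is which of Wolff's two output parameters you read off. For part (iii) you genuinely diverge: you run Wolff a third time on the quadruple $H^1_\alpha,\,L^{p_0}_\alpha,\,L^p_\alpha,\,{\rm BMO}_\alpha$, whereas the paper gets (iii) for free from (i) and (ii) together with the continuous inclusions $H_\alpha^1\subseteq L_\alpha^1$ and $L_\alpha^\infty\subseteq{\rm BMO}_\alpha$ and the monotonicity of the complex interpolation functor. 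Both routes are valid; the paper's is shorter, while yours is more self-contained in that it does not rely on the interpolation inclusions being sandwiched between two spaces already identified with $L^p_\alpha$.

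One hypothesis of Wolff's Theorem~2 that you wave at but do not actually check is the density of $A_0\cap A_3$ in the two intermediate spaces; the paper verifies it explicitly in each case, and it is the only place where something could go wrong. For your version of (iii) this is the point that needs genuine care: since $\mu_\alpha(X)<\infty$, integration against $\mu_\alpha$ is a bounded functional on $L^p_\alpha$ that annihilates every finitely supported function with vanishing integral, so those functions alone are \emph{not} dense in $L^{p_0}_\alpha$ or $L^p_\alpha$. You must also use the constant atom $a=\mu_\alpha(X)^{-1}$, which the paper's definition of $(1,p)$-atoms includes precisely so that $H^1_\alpha\cap{\rm BMO}_\alpha$ contains the constants as well; together with the finitely supported mean-zero functions these do span a dense subspace, and then your application of Wolff goes through.
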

	\begin{proof}
		\begin{enumerate}
			\item Let $1<r<p$ and $\beta\in(0,1)$ such that $\frac{1}{r}=1-\beta+\frac{\beta}{p}$. Then we have that $(L_\alpha^1,L_\alpha^p)_{[\beta]}=L_\alpha^r$. Furthermore, by Theorem~\ref{thmintcompl}~(i), if $\gamma\in(0,1)$ is such that $\frac 1p=\frac{1-\beta}r$ we have 
			\[(L_\alpha^r,{\rm BMO}_\alpha)_{[\gamma]}=L_\alpha^p.\]
			Since $L_\alpha^1\cap{\rm BMO}_\alpha$ contains the compact supported function on $X$, it is dense in both $L_\alpha^r$ and $L_\alpha^p$. Hence, by applying Theorem~2 in~\cite{wolff}, we have that 
			\[(L_\alpha^1,{\rm BMO}_\alpha)_{[\theta]}=L_\alpha^p. \]
			\item Let $1<r<p$ and $\beta\in(0,1)$ such that $\frac{1}{r}=1-\beta+\frac{\beta}{p}$. Then we have that $(L_\alpha^r,L_\alpha^\infty)_{[\gamma]}=L_\alpha^p$, where $\gamma\in(0,1)$ is such that $\frac 1p=\frac{1-\beta}r$. Furthermore, by Theorem~\ref{thmintcompl}~(ii), we have 
			\[(H_\alpha^1,L^p_\alpha)_{[\beta]}=L_\alpha^r.\]
			Since $H_\alpha^1\cap L_\alpha^\infty$ contains the space of compact supported function on $X$ having vanishing integral, it is dense in both $L_\alpha^r$ and $L_\alpha^p$. Hence, by applying Theorem~2 in~\cite{wolff}, we have that 
			\[(H_\alpha^1,L_\alpha^\infty)_{[\theta]}=L_\alpha^p. \]
			\item It follows from~(i) and~(ii), using the inclusions $H_\alpha^1\subseteq L_\alpha^1$ and $L_\alpha^\infty\subseteq {\rm BMO}_\alpha$.
		\end{enumerate}
	\end{proof}
	\section{Boundedness of integral operators}
		Fix $\alpha>1$ and let $K\colon X\times X\to \C$ be a kernel. 
		In the doubling measure metric space $(X,\rho,\mu_\alpha)$, the standard integral H\"ormander's condition (see~\cite{hormander} and formula~(10) Ch.I in~\cite{SteinHA}) for a kernel $K\colon X\times X\to \C$ reads as follows 
		\begin{equation*}
			\sup_{v\in X,r>0}\sup_{x,y\in B_\rho(v,r)}\int_{X\setminus B_\rho(v,2r)}|K(z,x)-K(z,y)|\mu_\alpha(z)<+\infty.
		\end{equation*}
		Thanks to the shape of the balls (see~\eqref{balle}), the condition above is equivalent to
		\begin{equation}\label{hormander}
			\sup_{v\in X\setminus\{o\}}\sup_{x,y\in \sect{v}}\sum_{z\in X\setminus \sect{v}}|K(z,x)-K(z,y)|q^{-\alpha|z|}<+\infty.
		\end{equation}
		We say that $K$ satisfies \textit{H\"ormander's condition} with respect to $\mu_\alpha$ if it satisfies~\eqref{hormander}.
		
	In~Theorem~31 in~\cite{dmv} we prove that an integral operator with a kernel satisfying H\"ormander's condition that is bounded on $L^2_\alpha$ is of weak type (1,1), and then it has a bounded extension on $L_\alpha^p$ for every $p\in(1,2]$.
	
	Now we show that the same H\"ormander's condition provides a boundedness result from $H_\alpha^1$ to $L_\alpha^1$. As a consequence, given a kernel $K$ such that $K$ and $\overline{K}$ satisfy~\eqref{hormander}, if the integral operator 
	\begin{equation}\label{integraloperator}
		\cK f(z)=\sum_{x\in X}K(z,x)f(x)q^{-\alpha|x|},\qquad f\in L_\alpha^2, 
	\end{equation}
	is bounded on $L_\alpha^2$ then it is bounded on $L_\alpha^p$, for every $1<p<\infty$.
	
	\begin{theorem}\label{theobound}
		Let $K\colon X\times X\to\C$ be a kernel on the tree. Consider an integral operator $\cK$ with kernel $K$, defined as~\eqref{integraloperator},
		 that is bounded on $L_\alpha^2$. Then the following results hold.
		\begin{enumerate}
			\item If the kernel $K$ satisfies the H\"ormander's condition~\eqref{hormander}, then $\cK$ extends to a bounded linear operator from $H_\alpha^1$ to $L^1_\alpha$ and on $L^p_\alpha$, for $1<p<2$.
			\item Let $K^*(x,y):=\overline{K(y,x)}$, for every $(x,y)\in X\times X$. If $K^*$ satisfies the H\"ormander's condition~\eqref{hormander}, then $\cK$ extends to a bounded linear operator from $L_\alpha^\infty$ to ${\rm BMO}_\alpha$ and on $L^p_\alpha$, for $2<p<\infty$.
		\end{enumerate}
	\end{theorem}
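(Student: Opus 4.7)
The plan is to prove part (i) by checking the action of $\cK$ on $(1,2)$-atoms and extending by linearity, and to derive part (ii) by duality; the $L^p_\alpha$ statements in both parts will follow from interpolation.

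For part (i), since $H^1_\alpha=H^{1,2}_\alpha$ with equivalent norms (see the discussion after Proposition~\ref{propduale}), it suffices to prove the uniform bound $\|\cK a\|_{1,\alpha}\lesssim 1$ for every $(1,2)$-atom $a$, as atomic decomposition will then extend this to all of $H^1_\alpha$. The exceptional atom $a=\mu_\alpha(X)^{-1}$ is handled immediately by Cauchy--Schwarz and the $L^2_\alpha$-boundedness of $\cK$. For any nontrivial atom, the vanishing-mean condition forces $a\equiv 0$ whenever the supporting set is a singleton, so I may assume $\supp a\subseteq \sect{v}$ for some $v\in X\setminus\{o\}$. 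I then split $X=\sect{v}\sqcup(X\setminus\sect{v})$: on the near piece, Cauchy--Schwarz and $L^2_\alpha$-boundedness give
\[ \sum_{z\in\sect{v}}|\cK a(z)|q^{-\alpha|z|}\leq\mu_\alpha(\sect{v})^{1/2}\|\cK a\|_{2,\alpha}\lesssim\mu_\alpha(\sect{v})^{1/2}\|a\|_{2,\alpha}\leq 1. \]
On the far piece, fixing any $x_0\in\sect{v}$ and invoking cancellation, I rewrite
\[ \cK a(z)=\sum_{x\in\sect{v}}\bigl(K(z,x)-K(z,x_0)\bigr)a(x)q^{-\alpha|x|}, \]
and Fubini together with~\eqref{hormander} yields
\[ \sum_{z\in X\setminus\sect{v}}|\cK a(z)|q^{-\alpha|z|}\leq\|a\|_{1,\alpha}\sup_{x\in\sect{v}}\sum_{z\in X\setminus\sect{v}}|K(z,x)-K(z,x_0)|q^{-\alpha|z|}\lesssim\|a\|_{1,\alpha}\leq 1, \]
the last bound again coming from Cauchy--Schwarz applied to the atomic $L^2_\alpha$-normalization.

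The $L^p_\alpha$-boundedness for $1<p<2$ in part (i) then follows either from the weak-type $(1,1)$ bound already established in Theorem~31 of~\cite{dmv} together with Marcinkiewicz interpolation against $L^2_\alpha$, or by complex interpolation of the $H^1_\alpha\to L^1_\alpha$ bound just obtained with $L^2_\alpha$-boundedness via Theorem~\ref{thmintcompl}~(ii). Part (ii) is obtained by duality: the adjoint $\cK^*$ has kernel $K^*$, is $L^2_\alpha$-bounded with the same norm, and by part (i) extends to a bounded map $H^1_\alpha\to L^1_\alpha$. Since $(H^1_\alpha)^*\cong{\rm BMO}_\alpha$ by Proposition~\ref{propduale}, dualizing produces the bounded extension $\cK\colon L^\infty_\alpha\to{\rm BMO}_\alpha$, and the $L^p_\alpha$-bound for $2<p<\infty$ then comes either by dualizing the $L^{p'}_\alpha$-bound of part (i) or by complex interpolation between $L^2_\alpha$ and ${\rm BMO}_\alpha$ using Theorem~\ref{thmintcompl}~(i).

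The main technical point that I expect to need care with is the extension from the atomic estimate to all of $H^1_\alpha$: one must check that the atomic series representations converge in a sense that lets boundedness on individual atoms pass to boundedness on the whole space. Working with $(1,2)$-atoms rather than $(1,\infty)$-atoms keeps this routine, since partial sums converge in $L^2_\alpha$ and $\cK$ is continuous on $L^2_\alpha$. The geometric feature that keeps the near/far split so clean --- namely that the Gromov balls are precisely sectors, so~\eqref{hormander} already involves the complement $X\setminus\sect{v}$ rather than the complement of a doubled ball --- is what eliminates the usual ball-inflation step of the classical Euclidean argument.
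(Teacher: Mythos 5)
Your argument is correct and is precisely the route the paper intends: the paper gives no details, saying only that the proof follows from Corollary~\ref{corintcom} and is almost verbatim the classical argument (uniform estimate on atoms via Cauchy--Schwarz on the supporting sector plus H\"ormander's condition off it, duality for (ii), interpolation for the $L^p_\alpha$ bounds), which is exactly what you have written out. Two small points to tidy up: a non-exceptional atom may also be supported on $D=X\in\cD$, a case your near-piece estimate already covers since $X\setminus\sect{o}=\emptyset$; and the partial sums of an atomic series need not converge in $L^2_\alpha$ (the atoms are only normalized by $\|a_j\|_{2,\alpha}\leq\mu_\alpha(D_j)^{-1/2}$, which is unbounded over deep sectors), so the extension from atoms to all of $H^1_\alpha$ should instead be justified by density of finite linear combinations of $(1,2)$-atoms, on which $\cK$ is already defined through its $L^2_\alpha$ action.
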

	The proof of the Theorem follows from Corollary~\ref{corintcom} and it is almost verbatim the classical proof of analog results, see for example Theorem~8.2 in~\cite{cmm}.
	
	\section{Conclusions}
	We focused on the specific family of exponentially decreasing measures but the whole theory could be replicated for a larger class of measures.
	
	In~\cite{ccps} and~\cite{ccps2}, the authors introduce harmonic Bergman spaces for a  class of measures called \textit{reference measures}. A reference measure is a finite measure $\sigma\colon X\to (0,+\infty)$ that is radial with respect to $o$ and decreasing with respect to the (graph) distance from $o$. Furthermore, a reference measure $\sigma$ is \textit{optimal} if 
	\[\sup_{v\in X}\frac{\sigma(\sect{v})}{\sigma(\{v\})}<+\infty. \]
	For every $\alpha>1$, the measure $\mu_\alpha$ is an optimal reference measure.
	
	It is possible to prove that the set of reference measures that are doubling with respect to the Gromov distance is exactly the family of optimal measures satisfying 
	\[\sup_{v\in X}\frac{\sigma({p(v)})}{\sigma(\{v\})}<+\infty. \]
	
%
%
%

\end{document}